\newcommand{\m}{\mathfrak{m}}
\newcommand{\Z}{{\mathbb Z}}
\newcommand{\N}{{\mathbb N}}
\newcommand{\C}{{\mathbb C}}
\newcommand{\Q}{{\mathbb Q}}
\newcommand{\R}{{\mathbb R}}
\newcommand{\G}{\mathcal{G}}
\newtheorem{thm}{Theorem}[section]
\newtheorem{lemma}[thm]{Lemma}
\newtheorem{prop}[thm]{Proposition}
\newtheorem{cor}[thm]{Corollary}
\newcommand{\Hom}{\mathrm{Hom}}
\newcommand{\St}{\mathrm{St}}
\begin{document}

\title[Character varieties]{On the character varieties of finitely generated groups}

\begin{abstract}
We establish three results dealing with the character varieties of finitely generated groups. The first two are concerned with the behavior of $\varkappa(\Gamma, n) = \dim X_n (\Gamma)$ as a function of $n$, and the third addresses the problem of realizing a $\Q$-defined complex affine algebraic variety as a character variety.
\end{abstract}

\author[I.A.~Rapinchuk]{Igor A. Rapinchuk}

\address{Department of Mathematics, Harvard University, Cambridge, MA 02138}

\email{rapinch@math.harvard.edu}

\maketitle

\section{Introduction}\label{S:I}

Let $\Gamma$ be a finitely generated group. It is well-known (cf., for example, \cite{LM}) that for each integer $n \geq 1,$ there exists a $\Q$-defined affine algebraic variety $R_n (\Gamma)$, called the {\it $n$-th representation variety of} $\Gamma$, such that for any field $F$ of characteristic 0, there is a natural bijection between the representations $\rho \colon \Gamma \to GL_n (F)$ and the set $R_n (\Gamma, F)$ of $F$-rational points of $R_n (\Gamma)$. (In this paper, we will fix an algebraically closed field $K$ of characteristic 0 and identify $R_n (\Gamma)$ with $R_n (\Gamma, K)$). Furthermore, there is a natural (adjoint) action of $GL_n$ on $R_n (\Gamma)$. The corresponding categorical quotient $X_n (\Gamma)$ is a $\Q$-defined affine algebraic variety called the {\it $n$-th character variety of} $\Gamma$; its points parametrize the isomorphism classes of completely reducible representations of $\Gamma$.

The goal of this paper is to establish three results about the character varieties of finitely generated groups. The first two deal with the dimension $\varkappa (\Gamma, n) := \dim X_n (\Gamma)$ as a function of $n.$ It is easy to see that $\varkappa (\Gamma, n) \leq \varkappa (\Gamma, m)$ for $n \leq m$ (cf. Lemma \ref{L:CharDim}), so the function $\varkappa (\Gamma, n)$ is increasing, and one would like to understand its rate of growth. In the case where $\Gamma = F_d$, the free group on $d$ generators, we have
$$
\varkappa (F_d, n) = \left\{ \begin{array}{cc} n, & d = 1 \\ (d-1)n^2 +1, & d > 1 \end{array} \right.
$$
It follows that the growth of $\varkappa(\Gamma, n)$ is {\it at most quadratic in $n$} for any $\Gamma.$ At the other end of the spectrum are the groups for which $\varkappa (\Gamma, n) = 0$ for all $n$ -- such groups are called {\it $SS$-rigid}. For example, according to Margulis's Superrigidity Theorem \cite[Ch. VII]{Ma}, irreducible lattices in higher rank Lie groups are $SS$-rigid. The question is {\it what rate of growth of $\varkappa (\Gamma, n)$ as a function of $n$ can actually occur for a finitely generated group $\Gamma$?} Our first result shows that if $\Gamma$ is \emph{not} $SS$-rigid, then the growth of $\varkappa(\Gamma, n)$ is {\it at least linear.}

\vskip2mm

\noindent {\bf Theorem 1.} {\it Let $\Gamma$ be a finitely generated group. If $\Gamma$ is not $SS$-rigid, then there exists a linear function $f (t) = at + b$ with $a > 0$  such that
$$
\varkappa(\Gamma, n) := \dim X_n (\Gamma) \geq f(n)
$$
for all $n \geq 1$.}

\vskip2mm
\noindent The proof will be given in \S \ref{S:SS}.

Our second result provides a large family of groups for which the rate of growth of $\varkappa(\Gamma, n)$ is indeed linear. The set-up is as follows. Let $\Phi$ be a reduced irreducible root system of rank $\geq 2$, $G$ the corresponding universal Chevalley-Demazure group scheme, and $R$ a finitely generated commutative ring. Then, it is known that the elementary subgroup $G(R)^+ \subset G(R)$ (i.e. the subgroup generated by the $R$-points of the 1-parameter root subgroups) has Kazhdan's property (T) (see \cite{EJK}), and hence is finitely generated. We note that in \cite{RS}, the finite presentation of Steinberg groups (which clearly implies the finite generation of $G(R)^+$) is proved directly in the case where rank $\Phi \geq 3$ (see also \cite{R} for a discussion of the finite generation of Chevalley groups of rank $\geq 2$ over the polynomial ring $k[t],$ with $k$ a field). Assume furthermore that $(\Phi, R)$ is a {\it nice pair}, that is $2 \in R^{\times}$ if $\Phi$ contains a subsystem of type $\mathsf{B}_2$ and $2,3 \in R^{\times}$ if $\Phi$ is of type $\mathsf{G}_2.$ In this paper, we give an alternative proof of the following result that we first established in \cite[Theorem 2]{IR1}.


\vskip2mm
\noindent {\bf Theorem 2.} {\it Let $\Phi$ be a reduced irreducible root system of rank $\geq 2$, $R$ a finitely generated commutative ring such that $(\Phi, R)$ is a nice pair, and $G$ the universal Chevalley-Demazure group scheme of type $\Phi$. Let $\Gamma = G(R)^+$ denote the elementary subgroup of $G(R)$ and consider the $n$-th character variety $X_n (\Gamma)$ of $\Gamma$ over an algebraically closed field $K$ of characteristic 0. Then there exists a constant $c = c(R)$ (depending only on $R$) such that $\varkappa(\Gamma, n) := \dim X_n (\Gamma)$ satisfies
\begin{equation}\label{E:LG}
\varkappa(\Gamma, n) \leq c \cdot n
\end{equation}
for all $n \geq 1.$}

\vskip2mm

We should add that since the elementary subgroups of Chevalley groups over finitely generated rings constitute essentially all known examples of discrete linear groups having Kazhdan's property (T), we were led in \cite{IR1} to formulate the following.

\vskip2mm

\noindent {\bf Conjecture.} {\it Let $\Gamma$ be a discrete linear group having Kazhdan's property (T). Then there exists a constant $c = c(\Gamma)$ such that $$\varkappa(\Gamma, n) \leq c \cdot n$$ for all $n \geq 1.$}

\vskip2mm

A question that remains is whether there exist groups $\Gamma$ for which the growth of $\varkappa (\Gamma, n)$ is strictly between linear and quadratic, i.e. we have
$$
\limsup_{n \to \infty} \frac{\varkappa(\Gamma, n)}{n} = \infty \ \ \ \text{and} \ \ \ \liminf_{n \to \infty} \frac{\varkappa(\Gamma, n)}{n^2} = 0.
$$
At present, no examples are known, though a possible approach to constructing such groups, which relies on the analysis of certain proalgebraic completions, has been suggested by M.~Kassabov.

The proof of Theorem 2 that we gave in \cite{IR1}
was based on the idea, going back to A.~Weil, of bounding the dimension of the tangent space to $X_n (\Gamma)$ at a point $[\rho]$ corresponding to a representation $\rho \colon \Gamma \to GL_n (K)$ by the dimension of the cohomology group $H^1 (\Gamma, \mathrm{Ad} \circ \rho).$ Then, using our rigidity results from \cite{IR}, we were able to relate the latter space to a certain space of derivations of $R$. In \S \ref{S:LG}, we give a new argument which is more geometric in nature and depends on a refined version of our previous rigidity statements
(see Theorem \ref{T:Rigidity}).

Finally, we would like to address the question of which $\Q$-defined complex algebraic varieties can actually occur as $X_n (\Gamma)$, for some finitely generated group $\Gamma$ and integer $n.$
This question was initially studied by M.~Kapovich and J.~Millson \cite{KM} in connection with their work on Serre's problem of determining which finitely presented groups occur as fundamental groups of smooth complex algebraic varieties. They proved the following result, which basically provides an answer up to birational isomorphism, and was a crucial ingredient in their construction of Artin and Shephard groups that are not fundamental groups of smooth complex algebraic varieties.

\vskip2mm

\noindent {\bf Theorem.} (\cite{KM}, Theorem 1.2) {\it For any affine variety $S$ defined  over
$\Q$, there is  an Artin group $\Gamma$ such that a
Zariski-open subset $U$ of $S$ is biregular isomorphic to  a
Zariski-open subset of $X (\Gamma, PO(3)).$}

\vskip2mm

We have been able to realize an arbitrary affine algebraic variety as a character variety ``almost" up to biregular isomorphism.


\vskip2mm

\noindent {\bf Theorem 3.} {\it Let $S$ be an affine algebraic variety defined over $\mathbb{Q}.$ There exist a finitely generated group $\Gamma$ having Kazhdan's property $(T)$ and an integer $n \geq 1$ such that there is a biregular $\Q$-defined isomorphism of complex algebraic varieties
$$
S (\mathbb{C}) \to X_n (\Gamma) \setminus \{ [\rho_0] \},
$$
where $\rho_0$ is the trivial representation and $[\rho_0]$ is the corresponding point of $X_n (\Gamma)$.}

\vskip2mm

\noindent (We note that for a Kazhdan group, $[\rho_0]$ is always an isolated point (cf. \cite[Proposition 1]{AR}).)

We would like to point out that the proofs of Theorems 2 and 3 rely extensively on our rigidity results contained in \cite{IR} and \cite{IR1}.

\vskip2mm

\noindent {\bf Acknowledgments.} I would like to thank Professor Gregory A. Margulis for his continued interest in my work. I would also like to thank Martin Kassabov for communicating to me some ideas that may lead to a construction of finitely generated groups for which the growth of $\varkappa(\Gamma, n)$ is strictly between linear and quadratic. The author was partially supported by an NSF Postdoctoral Fellowship.

\section{Preliminaries on character varieties}\label{S:Preliminaries}

In this section, we summarize some results on character varieties that will be needed later.
Throughout this section, we will work over a fixed algebraically closed field $K$ of characteristic 0.

We begin by recalling a couple of statements from \cite{AR1} concerning the relationship between the character varieties of a finitely generated group and those of its finite-index subgroups. First, suppose $f \colon Y \to Z$ is a morphism of affine algebraic varieties. We will say that $f$ is {\it quasi-finite} if it has finite fibers and {\it integral} if the induced map on coordinate rings $f^* \colon K[Z] \to K[Y]$ is integral\footnotemark \footnotetext{Recall that a ring homomorphism $f \colon A \to B$ is said to be {\it integral} if $B$ is integral over $f(A)$.} (clearly, an integral morphism is quasi-finite).  Now let $\Gamma$ be any finitely generated group and $\Delta \subset \Gamma$ be a finite-index subgroup. For any $n \geq 1$, restricting representations yields a regular map
\begin{equation}\label{E:RestrictionRep}
\mathrm{Res}_{\Delta}^{\Gamma} \colon R_n (\Gamma) \to R_n (\Delta).
\end{equation}
Since $\mathrm{Res}_{\Delta}^{\Gamma}$ clearly commutes with the adjoint action of $GL_n$, it induces a regular map
\begin{equation}\label{E:RestrictionChar}
\nu \colon X_n (\Gamma) \to X_n (\Delta).
\end{equation}

\begin{lemma}\label{L:Restriction}{\rm (\cite{AR1}, Lemma 1)}
For any finite-index subgroup $\Delta \subset \Gamma$, the restriction morphism $$\nu \colon X_n (\Gamma) \to X_n (\Delta)$$ is integral (and hence quasi-finite).
\end{lemma}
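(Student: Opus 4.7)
The plan is to work at the level of coordinate rings: the restriction morphism corresponds to the pullback $\nu^* \colon K[X_n(\Delta)] \to K[X_n(\Gamma)]$, and integrality of $\nu$ is the statement that $K[X_n(\Gamma)]$ is integral over $\nu^*(K[X_n(\Delta)])$. The quasi-finiteness assertion then follows for free, as noted in the footnote. My starting point will be Procesi's theorem, which tells us that $K[X_n(\Gamma)] = K[R_n(\Gamma)]^{GL_n}$ is generated as a $K$-algebra by the trace functions $\tau_\gamma \colon [\rho] \mapsto \tr \rho(\gamma)$ for $\gamma \in \Gamma$. Consequently, it suffices to show that every such $\tau_\gamma$ is integral over $\nu^*(K[X_n(\Delta)])$.

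To transfer from $\Gamma$ to $\Delta$, I would first replace $\Delta$ by its core: let $N = \bigcap_{\sigma \in \Gamma} \sigma \Delta \sigma^{-1}$, which is normal in $\Gamma$, of finite index $m$, and contained in $\Delta$. Then $\gamma^m \in N \subseteq \Delta$ for every $\gamma \in \Gamma$, so each trace function of the form $\tau_{\gamma^{mk}}$ is pulled back from $X_n(\Delta)$, i.e. lies in $\nu^*(K[X_n(\Delta)])$.

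The core of the argument is then to exploit Cayley--Hamilton together with Newton's identities. Fix $\gamma \in \Gamma$ and set $g = \rho(\gamma)$ with eigenvalues $\lambda_1, \ldots, \lambda_n$. The eigenvalues of $g^m$ are $\lambda_1^m, \ldots, \lambda_n^m$, and the elementary symmetric polynomials $e_k(\lambda_1^m, \ldots, \lambda_n^m)$ are expressible by Newton's identities as $\Z$-polynomials in the power sums $\tr g^{mj} = \tau_{\gamma^{mj}}(\rho)$ for $j = 1, \ldots, n$. Thus each $\lambda_i^m$ is a root of a monic polynomial over the subring generated by these traces, and therefore each $\lambda_i$ is integral over this subring (it satisfies $x^m - \lambda_i^m$, with $\lambda_i^m$ integral). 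Since integral elements form a ring, $\tau_\gamma(\rho) = \sum \lambda_i$ is also integral over the same subring.

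The main obstacle is to promote this pointwise statement into an honest polynomial identity in $K[X_n(\Gamma)]$, since individual eigenvalues are not globally defined regular functions. I would handle this by a universal symmetric-function argument: in the polynomial ring $K[\lambda_1, \ldots, \lambda_n]$, the element $s = \lambda_1 + \cdots + \lambda_n$ is integral over the subring $A = K[e_1(\lambda_1^m, \ldots, \lambda_n^m), \ldots, e_n(\lambda_1^m, \ldots, \lambda_n^m)]$, giving a monic polynomial $P(x) \in A[x]$ with $P(s) = 0$; the construction is symmetric in the $\lambda_i$, so $P$ is well-defined. Specializing $\lambda_i$ to the eigenvalues of $\rho(\gamma)$ turns the coefficients of $P$ into polynomials in $\tau_{\gamma^{mj}}$ for $j = 1, \ldots, n$, hence into elements of $\nu^*(K[X_n(\Delta)])$, and yields the identity $P(\tau_\gamma) = 0$ in $K[X_n(\Gamma)]$. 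This furnishes the required integral dependence, completing the proof.
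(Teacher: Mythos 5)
The paper itself gives no proof of this lemma, simply citing it as \cite{AR1}, Lemma 1; your argument is correct and is essentially the standard proof given in that reference. Reducing to showing that each Fricke function $\tau_\gamma$ is integral over $\nu^*(K[X_n(\Delta)])$ (legitimate, since these generate $K[X_n(\Gamma)]$ as a $K$-algebra and elements integral over a subring form a ring), passing to the normal core so that $\gamma^{mj}\in\Delta$ for all $j$, and then combining Newton's identities with the universal symmetric-function specialization to convert the pointwise eigenvalue computation into an honest monic relation $P(\tau_\gamma)=0$ with coefficients in $\nu^*(K[X_n(\Delta)])$ is exactly the right way to carry this out.
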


Suppose now that $\Delta \subset \Gamma$ has index $d.$ If we fix a system of representatives $\gamma_1, \dots, \gamma_d$ for $\Gamma/ \Delta,$ then we recall that for any representation $\rho \in R_m (\Delta),$ the {\it induced representation} $\tilde{\rho} \in R_{md} (\Gamma)$ has the following description: for $\gamma \in \Gamma$, the matrix $\tilde{\rho} (\gamma)$ consists of $d \times d$ blocks of size $m \times m$, the $ij$-th block being nonzero only if $\gamma \gamma_j = \gamma_i \delta$ with $\delta \in \Delta$, in which case it equals $\rho (\delta).$ It follows that the correspondence $\rho \mapsto \tilde{\rho}$ defines a regular map
$$
\alpha = \mathrm{Ind}_{\Delta}^{\Gamma} \colon R_m (\Delta) \to R_{md} (\Gamma)
$$
(which depends on the choice of a system of representatives $\Gamma / \Delta$). Since $\alpha$ is obviously compatible with the adjoint actions of $GL_m$ and $GL_{md}$ on the respective representation varieties, it descends to a morphism $\bar{\alpha} \colon X_m (\Delta) \to X_{md} (\Gamma)$ of the corresponding character varieties (which is independent of the choice of coset representatives).

\begin{lemma}\label{L:Induction}{\rm (\cite{AR1}, Lemma 3)}
If $\Delta \subset \Gamma$ is a subgroup of index $d$, then the induction morphism $$\bar{\alpha} \colon X_m (\Delta) \to X_{md} (\Gamma)$$ is quasi-finite.
\end{lemma}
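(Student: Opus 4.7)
The plan is to reduce the quasi-finiteness of $\bar\alpha$ to Lemma \ref{L:Restriction} by passing to the normal core $\Delta_0 := \bigcap_{\gamma \in \Gamma} \gamma \Delta \gamma^{-1}$ of $\Delta$ in $\Gamma$, which is a finite-index normal subgroup of $\Gamma$ contained in $\Delta$. Let $\nu_\Gamma \colon X_{md}(\Gamma) \to X_{md}(\Delta_0)$ and $\nu_\Delta \colon X_m(\Delta) \to X_m(\Delta_0)$ denote the corresponding restriction morphisms; by Lemma \ref{L:Restriction}, $\nu_\Delta$ is quasi-finite. Since the fiber of $\bar\alpha$ over any $z \in X_{md}(\Gamma)$ is contained in the fiber of $\beta := \nu_\Gamma \circ \bar\alpha$ over $\nu_\Gamma(z)$, it will suffice to show that $\beta \colon X_m(\Delta) \to X_{md}(\Delta_0)$ is quasi-finite.

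The key step is an explicit description of $\tilde\rho|_{\Delta_0}$. Choose coset representatives $\gamma_1 = e, \gamma_2, \ldots, \gamma_d$ for $\Gamma/\Delta$; normality of $\Delta_0$ in $\Gamma$ yields $\gamma_j^{-1}\delta\gamma_j \in \Delta_0 \subseteq \Delta$ for every $\delta \in \Delta_0$ and every $j$. The block description of $\tilde\rho$ recalled above then shows immediately that $\tilde\rho(\delta)$ is block-diagonal with $j$-th diagonal block equal to $\rho(\gamma_j^{-1}\delta\gamma_j)$. Setting $(\rho|_{\Delta_0})^{\gamma_j}(\delta) := \rho(\gamma_j^{-1}\delta\gamma_j)$ for $\delta \in \Delta_0$, this yields an isomorphism of $\Delta_0$-representations
$$
\tilde\rho|_{\Delta_0} \;\cong\; \bigoplus_{j=1}^d (\rho|_{\Delta_0})^{\gamma_j},
$$
so in particular $\rho|_{\Delta_0}$ itself is realized as the $j = 1$ summand.

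To conclude, I fix $\tau \in X_{md}(\Delta_0)$ and consider any $[\rho] \in \beta^{-1}(\tau)$. By Clifford's theorem (valid in characteristic zero for finite-index normal inclusions), $\sigma := \rho|_{\Delta_0}$ is completely reducible, and by the previous paragraph it embeds as a direct summand of the semisimple representation underlying $\tau$. Since $\tau$ has only finitely many irreducible constituents, this leaves only finitely many possibilities for $[\sigma] \in X_m(\Delta_0)$; for each, Lemma \ref{L:Restriction} applied to $\nu_\Delta$ forces the preimage in $X_m(\Delta)$ to be finite, so $\beta^{-1}(\tau)$ is finite as required. The one delicate point I anticipate is ensuring that $\sigma$ literally appears as a summand of the representation underlying $\tau$; this is secured by the (harmless) choice $\gamma_1 = e$, since $\bar\alpha$ is independent of coset representatives.
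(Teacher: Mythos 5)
Your argument is correct, but note that the paper itself gives no proof of this lemma --- it is quoted verbatim from \cite{AR1} (Lemma 3) --- so there is no internal proof to compare against. Your Mackey-type computation of $\tilde\rho|_{\Delta_0}$ as $\bigoplus_j (\rho|_{\Delta_0})^{\gamma_j}$ is right, and the finiteness count (finitely many isomorphism classes of direct summands of a fixed semisimple representation, then quasi-finiteness of $\nu_\Delta$ from Lemma \ref{L:Restriction}) closes the argument. That said, the detour through the normal core is not needed: restricting $\tilde\rho$ to $\Delta$ itself, the coset $\gamma_1\Delta = \Delta$ is a fixed point of the $\Delta$-action on $\Gamma/\Delta$, so the $(1,1)$ block of $\tilde\rho(\delta)$ is $\rho(\delta)$ and the off-diagonal blocks in the first row and column vanish; hence $\rho$ is already a direct summand of $\tilde\rho|_\Delta$. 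Since $[\rho]$ is semisimple, it is then a summand of the semisimplification of $\tilde\rho|_\Delta$, which is determined by $[\tilde\rho]$, and the same finiteness count recovers $[\rho]$ directly --- with no appeal to Lemma \ref{L:Restriction} at all. Your version trades this for an extra application of Lemma \ref{L:Restriction} because passing to $\Delta_0$ only pins down $[\rho|_{\Delta_0}]$ rather than $[\rho]$; the gain is that the block matrix becomes honestly block-diagonal, which you evidently found cleaner. Both routes are valid.
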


From the lemma, we obtain the following corollary, which will be needed in the proof of Theorem 1.

\begin{cor}\label{C:Induction}
Let $\Delta \subset \Gamma$ be a subgroup of finite index $d$. Then for any $m \geq 1$, we have
$$
\dim X_{md} (\Gamma) \geq \dim X_m (\Delta).
$$
\end{cor}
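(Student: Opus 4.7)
The plan is to derive this immediately from Lemma \ref{L:Induction}. The induction morphism
$$
\bar{\alpha} \colon X_m(\Delta) \to X_{md}(\Gamma)
$$
is quasi-finite, meaning all of its fibers are finite (i.e., zero-dimensional). I would then invoke the standard fact from dimension theory of morphisms of algebraic varieties: if $f \colon Y \to Z$ is a morphism and $\dim f^{-1}(z) = 0$ for all $z$ in the image, then $\dim f(Y) = \dim Y$. (This follows, for example, from the fiber dimension theorem applied to the restriction $Y \to \overline{f(Y)}$, since the generic fiber dimension equals $\dim Y - \dim \overline{f(Y)}$, and we are assuming this is zero.)

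Applying this to $\bar{\alpha}$ yields
$$
\dim \bar{\alpha}(X_m(\Delta)) = \dim X_m(\Delta).
$$
Since $\bar{\alpha}(X_m(\Delta)) \subset X_{md}(\Gamma)$, we conclude
$$
\dim X_{md}(\Gamma) \geq \dim \bar{\alpha}(X_m(\Delta)) = \dim X_m(\Delta),
$$
which is the desired inequality.

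There is essentially no obstacle here beyond correctly citing Lemma \ref{L:Induction} and the dimension-theoretic fact about morphisms with finite fibers; the corollary is a direct consequence. One minor point worth noting in the write-up is that $X_m(\Delta)$ may be reducible, but the dimension statement applies componentwise (or one simply uses that the dimension of a variety equals the maximum of the dimensions of its irreducible components, and $\bar{\alpha}$ restricts to a finite-fiber map on each component).
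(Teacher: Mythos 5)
Your proposal is correct and follows exactly the paper's argument: quasi-finiteness of $\bar{\alpha}$ from Lemma \ref{L:Induction} combined with the theorem on the dimension of fibers. The paper states this in one line; your additional remarks about reducibility and restricting to the closure of the image are fine but not needed beyond the standard statement of the fiber dimension theorem.
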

\begin{proof}
Since the map $\bar{\alpha} \colon X_m (\Delta) \to X_{md} (\Gamma)$ has finite fibers, the statement follows directly from the theorem on the dimension of fibers \cite[\S 6.3, Theorem 7]{Shaf}.
\end{proof}

Next, for any finitely generated group $\Gamma$, let $K[R_n (\Gamma)]$ and $K[X_n (\Gamma)]$ denote the coordinate rings of $R_n (\Gamma)$ and $X_n (\Gamma)$, respectively. It is well-known that $K[X_n (\Gamma)]$ is the subalgebra of $K[R_n (\Gamma)]$ generated by the Fricke functions $\tau_{\gamma} (\rho) = \mathrm{tr} (\rho (\gamma))$ and the inverse of the determinant functions $\delta_{\gamma} (\rho) = \det(\rho (\gamma))^{-1}$ for all $\gamma \in \Gamma$ (see \cite[1.31]{LM}). In fact, since $K[X_n (\Gamma)]$ is a finitely generated algebra, we only need to take the Fricke functions $\tau_{\gamma_1}, \dots, \tau_{\gamma_{\ell}}$ for some {\it finite} set $\{ \gamma_1, \dots, \gamma_{\ell} \} \subset \Gamma$ (and then, by a theorem of Procesi \cite{P}, it follows that the functions $\delta_{\gamma}$ are polynomials in $\tau_{\gamma_1}, \dots, \tau_{\gamma_{\ell}}$).

Suppose now that the Fricke functions $\tau_{\gamma_1}, \dots, \tau_{\gamma_{\ell}}$ are algebraically independent on some $R_n (\Gamma).$ Then, the Fricke functions corresponding to the same elements $\gamma_1, \dots, \gamma_{\ell}$ remain algebraically independent on $R_{m} (\Gamma)$ for any $m > n$ (this follows by considering representations of the form $\rho' = \rho \oplus {\bf 1}_{m - n}$, where $\rho \in R_n (\Gamma)$ and ${\bf 1}_{m - n}$ is the trivial representation of $\Gamma$ of dimension $m - n$). This leads to the following (elementary) statement, which we record for future use.

\begin{lemma}\label{L:CharDim}
For any $m > n,$ we have $\dim X_m (\Gamma) \geq \dim X_n (\Gamma).$
\end{lemma}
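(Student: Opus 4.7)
The paragraph immediately preceding the lemma essentially hands us the mechanism; the plan is just to formalize that observation into a dimension bound.

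First, set $d = \dim X_n(\Gamma)$. Since $K[X_n(\Gamma)]$ is a finitely generated $K$-algebra of Krull dimension $d$ and, by the discussion just given (using Procesi's theorem), it is generated as a $K$-algebra by a finite collection of Fricke functions $\tau_{\eta_1}, \dots, \tau_{\eta_\ell}$, we can extract a transcendence basis of $K[X_n(\Gamma)]$ over $K$ consisting of $d$ such functions, say $\tau_{\gamma_1}, \dots, \tau_{\gamma_d}$.

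Next, for $m > n$, I would consider the regular morphism
\[
\iota \colon R_n(\Gamma) \to R_m(\Gamma), \qquad \rho \longmapsto \rho \oplus \mathbf{1}_{m-n},
\]
where $\mathbf{1}_{m-n}$ is the trivial representation of $\Gamma$ of dimension $m-n$. For any $\gamma \in \Gamma$ one has
\[
\tau_{\gamma}^{(m)}(\iota(\rho)) = \tr\bigl(\rho(\gamma) \oplus I_{m-n}\bigr) = \tau_{\gamma}^{(n)}(\rho) + (m-n),
\]
so $\iota^{*}\tau_{\gamma}^{(m)} = \tau_{\gamma}^{(n)} + (m-n)$ as elements of $K[R_n(\Gamma)]$. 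In particular, the functions $\tau_{\gamma_1}^{(m)}, \dots, \tau_{\gamma_d}^{(m)} \in K[R_m(\Gamma)]$ pull back along $\iota$ to $\tau_{\gamma_1}^{(n)} + (m-n), \dots, \tau_{\gamma_d}^{(n)} + (m-n)$, which are algebraically independent over $K$ since the $\tau_{\gamma_i}^{(n)}$ are (as they form a transcendence basis of $K[X_n(\Gamma)]$, viewed inside $K[R_n(\Gamma)]$).

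Finally, because algebraic independence is preserved under surjective pullback (i.e.\ any nontrivial polynomial relation among the $\tau_{\gamma_i}^{(m)}$ in $K[R_m(\Gamma)]$ would force one among their $\iota$-pullbacks in $K[R_n(\Gamma)]$), the $\tau_{\gamma_i}^{(m)}$ are algebraically independent in $K[R_m(\Gamma)]$. Since Fricke functions are $GL_m$-invariant, they lie in $K[X_m(\Gamma)] \subset K[R_m(\Gamma)]$, and hence give $d$ algebraically independent elements of $K[X_m(\Gamma)]$. Therefore
\[
\dim X_m(\Gamma) = \mathrm{tr.deg}_K K[X_m(\Gamma)] \geq d = \dim X_n(\Gamma),
\]
which is what we wanted. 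There is no real obstacle here; the only point requiring a moment's care is the pullback computation showing the shift by the constant $(m-n)$, which is harmless for algebraic independence.
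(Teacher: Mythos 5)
Your proof is correct and follows exactly the mechanism the paper itself uses: the embedding $\rho \mapsto \rho \oplus \mathbf{1}_{m-n}$ preserves the algebraic independence of a maximal set of Fricke functions (the trace shift by $m-n$ being harmless), which is precisely the content of the paragraph preceding the lemma. Your write-up just makes the transcendence-degree bookkeeping explicit.
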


\noindent (In fact, it is easy to see that the embedding $R_n (\Gamma) \to R_m (\Gamma)$ that sends $\rho$ to $\rho'$ in the above notations induces an injection $X_n (\Gamma) \to X_m (\Gamma).$)

\vskip2mm

Finally, let us recall the following condition on a group $\Gamma$:

\vskip2mm

\noindent (FAb) \ \ \parbox[t]{15cm}{{\it for any finite-index subgroup $\Delta \subset \Gamma,$ the abelianization $\Delta^{\mathrm{ab}} = \Delta / [\Delta, \Delta]$ is finite}}

\vskip2mm
For groups with this property, we have the following.
\begin{prop}\label{D:P-KG}{\rm (\cite{AR}, Proposition 2)}
Let $\Gamma$ be a group satisfying {\rm (FAb)}. For any $n \geq 1$, there exists a finite collection $G_1, \dots, G_d$ of algebraic subgroups of $GL_n (K)$, such that for any completely reducible representation $\rho \colon \Gamma \to GL_n (K)$, the Zariski closure $\overline{\rho (\Gamma)}$ is conjugate to one of the $G_i.$ Moreover, for each $i$, the connected component $G_i^{\circ}$ is a semisimple group.
\end{prop}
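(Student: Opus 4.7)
The proof naturally splits into two parts: showing that the identity component $H^{\circ}$ of $H := \overline{\rho(\Gamma)}$ is semisimple for every completely reducible $\rho$, and then extracting a finite list of representatives up to $GL_n(K)$-conjugacy.

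For the semisimplicity step, recall that in characteristic zero complete reducibility of $\rho$ is equivalent to reductivity of $H^{\circ}$. Put $\Delta = \rho^{-1}(H^{\circ})$: since $H/H^{\circ}$ is finite and $\rho(\Gamma)$ is Zariski-dense in $H$, $\Delta$ has finite index in $\Gamma$ and $\rho(\Delta)$ is Zariski-dense in $H^{\circ}$. The composition $\Delta \to H^{\circ} \twoheadrightarrow S := H^{\circ}/[H^{\circ},H^{\circ}]$ lands in a torus (because $H^{\circ}$ is connected reductive), so it factors through $\Delta^{\mathrm{ab}}$, which is finite by (FAb). Its image is therefore both Zariski-dense in $S$ and finite, which forces $S = 1$; hence $H^{\circ}$ is semisimple.

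For the finiteness of the list, I would first invoke the classical fact that up to $GL_n(K)$-conjugacy there are only finitely many connected semisimple closed subgroups of $GL_n(K)$: such a subgroup has rank $\leq n$ and dimension $\leq n^2$, and its embedding corresponds to a faithful $n$-dimensional representation, of which only finitely many isomorphism classes exist by the classification. This fixes finitely many candidates for $H^{\circ}$. To promote this to a finite list of $H$'s themselves, I would place $H$ inside $N := N_{GL_n}(H^{\circ})$, whose component group $N/N^{\circ}$ is finite (embedding into $\mathrm{Out}(H^{\circ})$) and whose identity component is $N^{\circ} = H^{\circ} \cdot C_{GL_n}(H^{\circ})$, with $C_{GL_n}(H^{\circ}) \cong \prod_i GL_{m_i}(K)$ by Schur's lemma applied to the isotypic decomposition of $K^n$ as an $H^{\circ}$-module.

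The main obstacle is that a priori infinitely many conjugacy classes of finite subgroups of $N/H^{\circ}$ could occur as $H/H^{\circ}$, so (FAb) and finite generation of $\Gamma$ must together cut this down. I would close this gap via the surjection $\Gamma \twoheadrightarrow H/H^{\circ}$: since $\Gamma$ is finitely generated, it has only finitely many subgroups of each fixed finite index, so it suffices to bound $|H/H^{\circ}|$ uniformly as $\rho$ varies. Such a bound should follow by combining the isotypic structure of $K^n$ (which constrains the finite overgroups of $H^{\circ}$ admitting a compatible faithful $n$-dimensional representation) with (FAb) applied to finite-index subgroups, which forces the relevant finite quotients to be essentially non-abelian and severely restricts the possible extensions. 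Converting these structural constraints into a uniform numerical bound is the most delicate step, and is where I anticipate the main technical difficulty.
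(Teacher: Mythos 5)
The paper offers no proof of this proposition --- it is quoted directly from \cite{AR}, Proposition 2 --- so I can only assess your argument on its own terms. Your first half is correct and complete: in characteristic zero complete reducibility gives reductivity of $H^{\circ}$, the subgroup $\Delta=\rho^{-1}(H^{\circ})$ has finite index with $\rho(\Delta)$ Zariski-dense in $H^{\circ}$, and (FAb) forces the torus $H^{\circ}/[H^{\circ},H^{\circ}]$ to be trivial, so $H^{\circ}$ is semisimple. The finiteness, up to conjugacy, of the possible identity components is also handled correctly.

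The genuine gap is in the second half, and you flag it yourself: you never produce the uniform bound on $[H:H^{\circ}]$ as $\rho$ ranges over all completely reducible $n$-dimensional representations, and that bound is the real content of the proposition (the paper's Lemma \ref{L:Bound} extracts precisely this constant $m_1$ from it). It is not a formality: for a group failing (FAb), such as a free group, the closures $\overline{\rho(\Gamma)}$ include finite subgroups of unbounded order, so infinitely many conjugacy classes occur; hence (FAb) must enter a second time, more seriously than in the torus step. One way to close the gap: $[H:H\cap N^{\circ}]$ is bounded purely in terms of $n$ (via $N/N^{\circ}\hookrightarrow \mathrm{Out}(H^{\circ})$ and the finitely many possible $H^{\circ}$), so $\rho^{-1}(H\cap N^{\circ})$ ranges over the finitely many subgroups of the finitely generated group $\Gamma$ of index below that bound; their abelianizations are finite by (FAb), which yields a uniform constant controlling the abelian part of $H/H^{\circ}$ inside $C_{GL_n}(H^{\circ})/(C_{GL_n}(H^{\circ})\cap H^{\circ})$; Jordan's theorem then reduces an arbitrary finite subgroup of $\prod_i GL_{m_i}(K)$ to its normal abelian subgroup of index dividing $j(n)$. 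Your proposal names some of these ingredients but does not assemble them, and explicitly defers ``the most delicate step.'' A second, smaller omission: even granting a uniform bound $B$ on $[H:H^{\circ}]$, passing to finitely many conjugacy classes of the groups $H$ themselves requires knowing that a linear algebraic group over $K$ has only finitely many conjugacy classes of finite subgroups of order at most $B$; this is true in characteristic zero, but it is an additional rigidity input your sketch does not supply.
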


\section{A refined form of rigidity for Chevalley groups}

This section is devoted to establishing some refinements of our rigidity results from \cite{IR} that will be needed in the proofs of Theorems 2 and 3. Our set-up is as follows.
Let $\Phi$ be a reduced irreducible root system of rank $\geq 2$ and $G$ be the corresponding universal Chevalley-Demazure group scheme. For a commutative ring $R$, we let $\Gamma = G(R)^+ \subset G(R)$ be the {\it elementary subgroup}, i.e. the subgroup generated by the $R$-points of the 1-parameter root subgroups.
Throughout this section, we will always assume that $(\Phi, R)$ is a {\it nice pair}, i.e. $2 \in R^{\times}$ if $\Phi$ contains a subsystem of type $\mathsf{B}_2$ and $2, 3 \in R^{\times}$ if $\Phi$ is of type $\mathsf{G}_2.$


\begin{thm}\label{T:Rigidity}
Let $\Phi$ be a reduced irreducible root system of rank $\geq 2$ and $G$ be the corresponding universal Chevalley-Demazure group scheme. Suppose $R$ is a finitely generated commutative ring such that $(\Phi, R)$ is a nice pair, set $\Gamma = G(R)^+$, and let $K$ be an algebraically closed field of characteristic 0. Fix an integer $n \geq 1.$

\vskip1mm

\noindent {\rm (i)} \parbox[t]{16cm}{There exists a finite index subgroup $\Delta \subset \Gamma = G(R)^+$ such that for \emph{any} (nontrivial) completely reducible representation $\rho \colon \Gamma \to GL_n (K)$, we have
$$
\rho \vert_{\Delta} = (\sigma \circ F) \vert_{\Delta},
$$
where
$$F \colon G(R) \to \underbrace{G(K) \times \cdots \times G(K)}_{k \ {\rm copies}}$$
is a group homomorphism arising from a ring homomorphism $$f \colon R \to \underbrace{K \times \cdots \times K}_{k \ {\rm copies}}$$ with Zariski-dense image, $k \leq n$, and $\sigma \colon G(K) \times \cdots \times G(K) \to GL_n (K)$ is a morphism of algebraic groups.}

\vskip1mm

\noindent {\rm (ii)} \parbox[t]{16cm}{There exists an integer $M$ (depending \emph{only} on $n$) such that if $M \in R^{\times}$, then one can take $\Delta = \Gamma.$}

\end{thm}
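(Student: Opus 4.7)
The plan is to combine Proposition~\ref{D:P-KG} with the main rigidity theorem of \cite{IR} for representations having connected semisimple Zariski closure. Since $\Gamma = G(R)^+$ has Kazhdan's property~(T), every finite-index subgroup of $\Gamma$ also has property~(T), hence finite abelianization, so $\Gamma$ satisfies condition~(FAb). Applying Proposition~\ref{D:P-KG} then yields a finite collection $G_1, \ldots, G_d \subset GL_n(K)$ of algebraic subgroups (each with $G_i^\circ$ semisimple) such that for every completely reducible $\rho \colon \Gamma \to GL_n(K)$, the Zariski closure $H = \overline{\rho(\Gamma)}$ is conjugate to one of the $G_i$; set $N = \max_i [G_i : G_i^\circ]$.

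For part~(i), since $\Gamma$ is finitely generated, it has only finitely many subgroups of index at most $N$; I let $\Delta$ be their intersection, which is again a finite-index subgroup. By construction, for any nontrivial completely reducible $\rho$, the preimage $\rho^{-1}(H^\circ)$ has index $[H:H^\circ] \leq N$ in $\Gamma$ and hence contains $\Delta$, so $\rho(\Delta) \subset H^\circ$ lies in a connected semisimple subgroup of $GL_n$ and is Zariski-dense there. Applying the rigidity theorem of \cite{IR} to $\rho|_\Delta$ then produces a ring homomorphism $f \colon R \to K^k$ ($k \leq n$) with Zariski-dense image and a morphism of algebraic groups $\sigma \colon G(K)^k \to GL_n(K)$ such that $\rho|_\Delta = (\sigma \circ F)|_\Delta$, where $F$ is the group homomorphism $G(R) \to G(K)^k$ induced by $f$. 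The main technical point is to upgrade the rigidity theorem of \cite{IR}, originally formulated for representations of the full elementary group $G(R)^+$, to our setting where we only control $\rho$ after restriction to $\Delta$; the key idea is that $\rho(\Delta)$ remains Zariski-dense in $H^\circ$, which determines $f$, while $\sigma \circ F$ is intrinsically defined on all of $G(R)$.

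For part~(ii), the passage to $\Delta$ in part~(i) is required solely to force $\rho(\Gamma)$ into the connected component $H^\circ$, the obstruction being the composed homomorphism $\Gamma \to H/H^\circ$ into a finite group. The plan is to choose $M$, depending only on $n$, so that whenever $M \in R^\times$ any homomorphism from $\Gamma$ into a finite group of bounded exponent must be trivial. The crucial observation is that each root subgroup $U_\alpha(R) \cong (R,+)$ becomes $M$-divisible when $M$ is a unit in $R$, so any homomorphism $U_\alpha(R) \to F$ into a finite group of exponent dividing $M$ is automatically trivial; since $\Gamma = G(R)^+$ is generated by the root subgroups $U_\alpha(R)$, this forces $\Gamma \to H/H^\circ$ to be trivial, so $\rho(\Gamma) \subset H^\circ$ and one can take $\Delta = \Gamma$ in part~(i). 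I expect the main obstacle here to be extracting a uniform bound (depending only on $n$) on the exponent of the possible component groups $H/H^\circ$ for semisimple $H^\circ \subset GL_n$; this should follow from the fact that $H$ embeds in the normalizer $N_{GL_n}(H^\circ)$ and the component structure of such normalizers can be uniformly bounded in terms of $n$.
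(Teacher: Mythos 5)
There is a genuine gap, and it concerns which obstruction forces the passage to a finite-index subgroup. In \cite[Main Theorem]{IR} the subgroup on which the standard description $\rho = \sigma\circ F$ holds is $\tilde{\Delta}_\rho = F^{-1}(G(A(\rho)^{\circ}))$, where $A(\rho)$ is the algebraic ring associated to $\rho$; its index in $\Gamma$ is controlled by the finite direct factor $C$ in the decomposition $A(\rho) = A(\rho)^{\circ}\oplus C$, \emph{not} by the component group $H/H^{\circ}$ of $H = \overline{\rho(\Gamma)}$. Your argument bounds only $[H:H^{\circ}]$ (correctly, via Proposition \ref{D:P-KG} and (FAb)), and then asserts that once $\rho(\Delta)\subset H^{\circ}$ one may ``apply the rigidity theorem of \cite{IR} to $\rho|_{\Delta}$.'' That application is not licit: $\Delta$ is an abstract finite-index subgroup, not an elementary Chevalley group over a ring, so \cite{IR} does not apply to it; and applying \cite{IR} to $\rho$ on $\Gamma$ returns a subgroup $\tilde{\Delta}_\rho$ depending on $\rho$, with no a priori reason to contain your $\Delta$. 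The missing ingredient is precisely the paper's Lemma \ref{L:Bound}: using the commutator formula for the maps $\psi_{\alpha}$ one shows $\psi_{\alpha_0}(C)$ lies in the centralizer $Z_H(H^{\circ})$, whose order is at most $[H:H^{\circ}]\cdot|Z(H^{\circ})|$, giving a bound $|C|\leq N(n)$ and hence a uniform bound on $[\Gamma:\tilde{\Delta}_\rho]$; only then does taking $\Delta$ to be the intersection of all subgroups of index $\leq N'$ work. Note in particular that connectedness of $H$ does \emph{not} imply $C=0$: $\psi_{\alpha_0}(C)$ can sit inside the finite center of the connected semisimple group $H^{\circ}$.

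The same misidentification undermines part (ii). Your divisibility argument (root subgroups are $M$-divisible when $M\in R^{\times}$, so $\Gamma\to H/H^{\circ}$ is trivial) is fine as far as it goes, but it only shows $H$ is connected, which, as above, does not yield $\Delta=\Gamma$. The paper instead kills $C$ directly: with $M=N!$ and $M\in R^{\times}$, the element $M$ is invertible in $A(\rho)$ (since $f$ is a ring homomorphism), yet $M$ annihilates the finite ring $C$ of order $\leq N$, forcing $C=0$ and hence $\tilde{\Delta}_\rho = F^{-1}(G(A(\rho)^{\circ})) = \Gamma$. To repair your proof you would need to replace the analysis of $H/H^{\circ}$ by an analysis of $A(\rho)/A(\rho)^{\circ}$ throughout.
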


\vskip2mm

\noindent {\bf Remark 3.2.} We would like to point out the difference between \cite[Main Theorem]{IR} and Theorem \ref{T:Rigidity}: given a representation $\rho \colon \Gamma \to GL_n (K),$ the former guarantees the existence of a finite index subgroup $\tilde{\Delta} \subset \Gamma$ such that $\rho \vert_{\tilde{\Delta}}$ has a standard description. On the other hand, under the additional assumption that $R$ is a finitely generated ring, Theorem \ref{T:Rigidity} gives us a finite index subgroup $\Delta \subset \Gamma$ that works uniformly for \emph{all} completely reducible representations of $\Gamma$ of a given dimension.

\vskip2mm

The theorem will follow from \cite[Main Theorem]{IR} and Lemma \ref{L:Bound} below. Before giving the statement of the lemma, let us recall that to any representation $\rho \colon \Gamma \to GL_n (K)$, one can associate a commutative algebraic ring $A(\rho)$, together with a homomorphism of abstract rings $f \colon R \to A(\rho)$ having Zariski-dense image such that for every root $\alpha \in \Phi$, there is an injective regular map $\psi_{\alpha} \colon A(\rho) \to H$ into $H := \overline{\rho (G(R)^+)}$ (Zariski closure) satisfying
\begin{equation}\label{E:ARR101}
\rho (e_{\alpha} (t)) = \psi_{\alpha} (f(t)).
\end{equation}
for all $t \in R$ (see \cite[Theorem 3.1]{IR}).






\addtocounter{thm}{1}

\begin{lemma}\label{L:Bound}
Let $\Phi$ be a reduced irreducible root system of rank $\geq 2$ and $G$ be the corresponding universal Chevalley-Demazure group scheme. Suppose $R$ is a finitely generated commutative ring such that $(\Phi, R)$ is a nice pair, set $\Gamma = G(R)^+$, and let $K$ be an algebraically closed field of characteristic 0. Fix $n \geq 1$. Given a representation $\rho \colon \Gamma \to GL_n (K)$, we let $A = A(\rho)$ be the associated algebraic ring and denote by $A(\rho)^{\circ}$ the connected component of $A(\rho)$. Then there exists an integer $N = N(n) \geq 1$ (depending \emph{only} on $n$) such that $[A(\rho) : A(\rho)^{\circ}] \leq N$ for \emph{any} completely reducible representation $\rho \colon \Gamma \to GL_n (K).$
\end{lemma}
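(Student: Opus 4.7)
The plan is to transfer the problem through $\psi_\alpha$ to an analysis of the commutative subgroup $C := \psi_\alpha(A(\rho))$ of $H := \overline{\rho(\Gamma)}$, and then to combine Kazhdan rigidity of $\Gamma$ with the structural properties of $A(\rho)$ from \cite{IR}.

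First, I would use that $\Gamma = G(R)^+$ has Kazhdan's property $(T)$ by \cite{EJK}, so every finite-index subgroup is Kazhdan and hence has finite abelianization; in particular, $\Gamma$ satisfies condition (FAb). Applying Proposition \ref{D:P-KG} then yields a finite list of algebraic subgroups $G_1, \ldots, G_d \subset GL_n(K)$ with semisimple identity components such that, for every completely reducible $\rho \colon \Gamma \to GL_n(K)$, the Zariski closure $H = \overline{\rho(\Gamma)}$ is conjugate to some $G_i$. In particular, $[H : H^\circ]$ is bounded by $N_0 := \max_i [G_i : G_i^\circ]$, a constant depending only on $n$.

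Next, I would show that $\psi_\alpha$ is in fact an injective homomorphism of algebraic groups from $(A(\rho), +)$ to $(H, \cdot)$: the relation $\rho(e_\alpha(t+s)) = \rho(e_\alpha(t))\rho(e_\alpha(s))$ combined with $\rho(e_\alpha(t)) = \psi_\alpha(f(t))$ gives $\psi_\alpha(f(t) + f(s)) = \psi_\alpha(f(t))\psi_\alpha(f(s))$, and Zariski density of $f(R)$ in $A(\rho)$ extends this identity to all of $A(\rho) \times A(\rho)$. Since in characteristic zero an injective morphism of algebraic groups is a closed immersion, $\psi_\alpha$ identifies $A(\rho)$ with $C$ as algebraic groups, so $[A(\rho) : A(\rho)^\circ] = [C : C^\circ]$. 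Writing this as $[C : C \cap H^\circ] \cdot [C \cap H^\circ : C^\circ]$, the first factor embeds into $H/H^\circ$ and so is at most $N_0$.

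For the second factor I would invoke the structure of algebraic rings in characteristic zero: $A(\rho)^\circ$ is a finite-dimensional commutative $K$-algebra with unipotent additive group, so $C^\circ$ is a connected unipotent subgroup of the semisimple group $H^\circ$, while the finite quotient $F := A(\rho)/A(\rho)^\circ$ maps via $\psi_\alpha$ to a finite commutative (diagonalizable) subgroup $D \subseteq H^\circ$ centralizing $C^\circ$. The main obstacle is then to bound $|D|$ uniformly in $\rho$. I expect to address this by combining (i) the finiteness of the list $\{G_i\}$, which bounds the rank of any maximal diagonalizable subgroup of $Z_{H^\circ}(C^\circ)$ in terms of $n$, and (ii) the ring structure on $F$ together with the compatibility between addition and multiplication encoded by the Chevalley commutator formulas for pairs of roots $\alpha, \beta$, which restricts the possible $F$ to a class with uniformly bounded cardinality. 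Taking $N(n) := N_0 \cdot N_1$, where $N_1 = N_1(n)$ bounds $|D|$, then yields the lemma.
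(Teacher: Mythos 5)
Your opening step coincides with the paper's: property (T) from \cite{EJK} gives (FAb), and Proposition \ref{D:P-KG} yields a uniform bound $N_0$ on $[H:H^{\circ}]$ for $H = \overline{\rho(\Gamma)}$. The reduction to bounding the finite part of $\psi_{\alpha}(A(\rho))$ is also sound. But the decisive step --- actually bounding that finite part --- is left as ``I expect to address this by combining (i) and (ii),'' and neither (i) nor (ii) works as stated. For (i): bounding the \emph{rank} of a diagonalizable subgroup containing your finite group $D$ gives no bound on $|D|$, since $\mathbb{G}_m$ contains $\mu_m$ for every $m$; more fundamentally, the centralizer of the unipotent group $C^{\circ}$ in a semisimple group is large, and controlling it does not pin down $D$. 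For (ii): the abstract ring structure of the finite quotient $F \cong A(\rho)/A(\rho)^{\circ}$ is not a priori restricted to any bounded class --- nothing in the axioms of an algebraic ring prevents the finite complement from being $\Z/m\Z$ for arbitrary $m$. The bound has to come from how this finite piece sits inside $H$, and your proposal does not extract that.

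The missing idea is the following. Writing $A(\rho) = A(\rho)^{\circ} \oplus C$ with $C$ a finite ring (so $A(\rho)^{\circ} \cdot C = 0$ as ideals), fix a root $\alpha_0$ and apply the commutator formula
$[\psi_{\alpha}(a), \psi_{\beta}(b)] = \prod \psi_{i\alpha + j\beta}(N^{i,j}_{\alpha,\beta}\, a^i b^j)$
with $a \in A(\rho)^{\circ}$, $b \in C$: every argument $a^i b^j$ vanishes, so $\psi_{\alpha_0}(C)$ commutes with $\psi_{\alpha}(A(\rho)^{\circ})$ for all $\alpha \neq -\alpha_0$. Since $H^{\circ}$ is generated by the $\psi_{\alpha}(A(\rho)^{\circ})$ with $\alpha \neq -\alpha_0$ (a Weyl-group argument), $\psi_{\alpha_0}(C)$ lands in $\Theta := Z_H(H^{\circ})$ --- the centralizer of the \emph{whole} identity component, not merely of $C^{\circ}$. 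This is what makes the group small: $\Theta \cap H^{\circ} \subset Z(H^{\circ})$, which has order at most some $m_2 = m_2(n)$ because $H^{\circ}$ is semisimple and ranges over a finite list, so $|\Theta| \leq [H:H^{\circ}] \cdot |Z(H^{\circ})| \leq N_0 m_2$. Injectivity of $\psi_{\alpha_0}$ then bounds $|C|$, hence $[A(\rho):A(\rho)^{\circ}]$. (Note also that your claim $D \subseteq H^{\circ}$ is unjustified; the finite part need not lie in the identity component, which is why the paper bounds $|\Theta|$ via the two-step estimate above rather than working inside $H^{\circ}$.)
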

\begin{proof}
First, as we remarked earlier, $\Gamma$ has Kazhdan's property $(T)$ \cite{EJK}, and consequently satisfies condition (FAb) (cf. \cite{HV}). So, it follows from Proposition \ref{D:P-KG}
that there exists an integer $m_1 \geq 1$ (depending only on $n$) such that $[H : H^{\circ}] \leq m_1$ for any completely reducible representation $\rho,$ where $H = \overline{\rho(\Gamma)}.$

Now fix some completely reducible representation $\rho \colon \Gamma \to GL_n (K)$ and let $A$ and $f \colon R \to A$
be the associated algebraic ring and ring homomorphism.
Since $K$ has characteristic 0, we have $A = A^{\circ} \oplus C$, where $C$ is a finite ring (cf. \cite[Proposition 2.14]{IR}). By \cite[Proposition 5.3]{IR}, the connected component $H^{\circ}$ coincides with the subgroup of $H$ generated by all of the $\psi_{\alpha}(A^{\circ})$, for $\alpha \in \Phi.$ Let us fix a root $\alpha_0.$ We claim that $\psi_{\alpha_0} (C)$ centralizes $H^{\circ}.$ Indeed, since for any $\alpha, \beta \in \Phi$, $\alpha \neq - \beta,$ we have the commutator formula
$$
[\psi_{\alpha} (a), \psi_{\beta} (b)] = \prod \psi_{i \alpha + j \beta} (N^{i,j}_{\alpha, \beta} a^i b^j)
$$
(see \cite[Proposition 4.2]{IR}), it follows that
$$
[\psi_{\alpha} (a), \psi_{\alpha_0}(b)] = 1
$$
for any $\alpha \neq -\alpha_0$ and $a \in A^{\circ}, b \in C.$ On the other hand, it is easy to show using the fact that the Weyl group is generated by the standard reflections $w_{\alpha}$ for $\alpha \neq \pm \alpha_0$ (see, for example, the proof of \cite[Lemma 4.1]{CongKer}) that
$$
H^{\circ} = \langle \psi_{\alpha} (A^{\circ}) \mid \alpha \in \Phi, \alpha \neq - \alpha_0 \rangle,
$$
which yields our claim.

Next, let $\Theta = Z_H (H^{\circ})$ denote the centralizer of $H^{\circ}$ in $H$. Clearly, we have $\Theta \cap H^{\circ} \subset Z(H^{\circ})$, where $Z(H^{\circ})$ is the center of $H^{\circ}.$ Since there are obviously only finitely many possibilities for $H^{\circ}$ and $H^{\circ}$ is semisimple, there exists an integer $m_2$ (depending only on $n$) such that $\vert \Theta \cap H^{\circ} \vert \leq \vert Z(H^{\circ}) \vert \leq m_2.$ Thus,
$$
\vert \Theta \vert = [ \Theta : \Theta \cap H^{\circ} ]\vert \Theta \cap H^{\circ} \vert \leq m_1 \cdot m_2.
$$
Set $N = m_1 \cdot m_2.$ Since $\psi_{\alpha_0} (C) \subset \Theta$ and $\psi_{\alpha_0}$ is injective, the lemma follows.

\end{proof}

\vskip2mm

\noindent {\it Proof of Theorem \ref{T:Rigidity}.} (i) Let $\rho \colon \Gamma \to GL_n (K)$ be any completely reducible representation. Then, combining \cite[Main Theorem, Proposition 2.20, and Lemma 5.7]{IR} and \cite[Lemma 4.2]{IR1}, we obtain a finite-dimensional commutative $K$-algebra
$$
B = \underbrace{K \times \cdots \times K}_{k \ {\rm copies}}
$$
with $k \leq n$, together with a ring homomorphism $f \colon R \to B$ with Zariski-dense image, and a morphism of algebraic groups $\sigma \colon G(B) \to H = \overline{\rho(\Gamma)}$, such that on a suitable finite-index subgroup $\tilde{\Delta} \subset \Gamma$, we have
\begin{equation}\label{E:StandardDes}
\rho \vert_{\tilde{\Delta}} = (\sigma \circ F) \vert_{\tilde{\Delta}},
\end{equation}
where $F \colon \Gamma \to G(B)$ is the group homomorphism induced by $f.$ By construction, $B$ is the connected component $A^{\circ}(\rho)$ of the algebraic ring $A(\rho)$ associated to $\rho.$
Since char $K = 0,$ we can write
$$
A(\rho) = A(\rho)^{\circ} \oplus C
$$
where $C$ is a finite ring \cite[Proposition 2.14]{IR}, and by Lemma \ref{L:Bound}, there is a uniform bound on the size of $C$. Hence, we have a uniform bound on the size of $G(C)$, or equivalently, on the index of $G(A(\rho)^{\circ})$ in $G(A(\rho))$: let $N' \geq 1$ such that
$$
[G(A(\rho)) : G(A(\rho)^{\circ})] \leq N'
$$
for any completely reducible representation $\rho \colon \Gamma \to GL_n (K).$ Next, it follows from the discussion at the beginning of \cite[\S 5]{IR} that the group $\tilde{\Delta}$ appearing in (\ref{E:StandardDes}) coincides with $F^{-1}(G(A^{\circ}))$, where, as before, $F \colon \Gamma \to G(A(\rho))$ is the group homomorphism induced by the ring homomorphism $f \colon R \to A(\rho).$ Now, it is well-known that a finitely generated group has only finitely many subgroups of a given index, and obviously the intersection of finitely many subgroups of finite index is again a finite index subgroup. Thus, taking $\Delta$ to be the the intersection of all subgroups of $\Gamma$ of index $\leq N'$ 
completes the proof.


\vskip2mm

\noindent (ii) As we already saw in the proof of (i), the need to pass to a finite index subgroup arises from the presence of a finite ring $C$ in the decomposition $A(\rho) = A(\rho)^{\circ} \oplus C.$ Let $M = N!$, where $N$ is the integer appearing in the statement of Lemma \ref{L:Bound}, so that $\vert C \vert \leq N$, and let $f \colon R \to A(\rho)$ be the ring homomorphism with Zariski-dense image associated to $\rho.$ Then, if $M \in R^{\times},$ it follows that $M \in A(\rho)^{\times}$. On the other hand, by construction $M$ annihilates $C$, so $C = 0.$

\hfill $\Box$

\vskip3mm


\noindent {\bf Remark 3.4.} The fact that $R$ is finitely generated was used in the proof of Theorem \ref{T:Rigidity}(ii) in order to conclude that $\Gamma$ satisfies (FAb) and then invoke Proposition \ref{D:P-KG}. While this is the only case we will need to consider in the present paper, we would like to point that the result actually holds without this assumption. However, the argument becomes more involved and we will only sketch it for the sake of completeness. One of the ingredients is the following well-known result of Jordan: {\it Let $K$ be an algebraically closed field of characteristic 0. There exists a function $j \colon \N \to \N$ such that if $\mathcal{G} \subset GL_m (K)$ is a finite group, then $\mathcal{G}$ contains an abelian normal subgroup $\mathcal{N}$ whose index $[\mathcal{G} : \mathcal{N}]$ divides $j(m)$} (we refer the reader to \cite{EB} for a discussion of Jordan's original proof).


As before, we consider a completely reducible representation $\rho \colon \Gamma \to GL_n (K)$. Let $A(\rho)$ and $f \colon R \to A(\rho)$ be the algebraic ring and ring homomorphism associated to $\rho.$ Since char $K = 0$, we have a decomposition
$$
A(\rho) = A(\rho)^{\circ} \oplus C,
$$
and we need to show that $C = 0$ provided that a sufficiently large integer $M$, to be specified below, is invertible in $R$. Now, by \cite[Proposition 4.2]{IR}, there exists a group homomorphism
\begin{equation}\label{E:Steinberglift}
\tilde{\tau} \colon \St(\Phi, A) \to H, \ \ \ \tilde{x}_{\alpha} (a) \mapsto \psi_{\alpha} (a) \ \ \text{for all} \ \ a \in A, \ \alpha \in \Phi,
\end{equation}
where for any commutative ring $S$ and root system $\Phi,$ we let $\St (\Phi, S)$ denote the corresponding Steinberg group.
So, in view of the injectivity of the maps $\psi_{\alpha}$, for all $\alpha \in \Phi$, it suffices to show that there are no nontrivial group homomorphisms $\tilde{\rho} \colon \St (\Phi, C) \to GL_m (K).$



First, since $C$ is finite, hence artinian, we can write it as a product of local rings $$C = C_1 \times \cdots \times C_r.$$ Then $\St (\Phi, R) = \St (\Phi, C_1) \times \cdots \times \St (\Phi, C_r),$  and it suffices to show that there are no nontrivial homomorphisms $\tilde{\rho}_i \colon \St (\Phi, C_i) \to GL_m (K)$ for all $i.$ Thus, we may assume without loss of generality that $C$ is a local ring with maximal ideal $\m$ and residue field $C/ \m = \mathbb{F}_q.$ Recall that we have $G(S)^+ = G(S)$ for any semilocal ring $S$ (cf. \cite{M}).

Next, let $$G(C, \m) = \ker (G(C) \stackrel{\chi}{\longrightarrow} G(C/\m))$$ (where $\chi$ is the group homomorphism induced by the canonical map $C \to C/\m$) be the congruence subgroup of $G(C) = G(C)^+$ of level $\m.$ Since $\m$ is a nilpotent ideal, it follows that $G(C, \m)$ is nilpotent (see the proof of \cite[Lemma 5.7]{IR}). Moreover, the fact that $G(\mathbb{F}_q)$ coincides with $G(\mathbb{F}_q)^+$ implies that $\sigma$ is surjective and
$$
G(C)/G(C, \m) \simeq G(\mathbb{F}_q).
$$
Now, let $Z$ be the center of $G(\mathbb{F}_q).$ It is well-known that $G(\mathbb{F}_q)/ Z$ is a simple group provided $\vert \mathbb{F}_q \vert \geq 5$ (see \cite[Theorem 5, pg 47]{Stb1}), and it follows from the above remarks that $\tilde{Z} : = \chi^{-1} (Z)$ is a solvable subgroup of $G(C).$

Let $\pi_C \colon \St (\Phi, C) \to G(C)$ be the canonical map, and set $P = \pi^{-1}_C (\tilde{Z}).$ Since
$$
K_2 (\Phi, C) : = \ker (\St (\Phi, C) \stackrel{\pi_C}{\longrightarrow} G(C)^+)
$$
is central in $\St (\Phi, C)$ by \cite[Theorem 2.13]{St2} , we see that $P$ is a solvable subgroup of $\St (\Phi,C)$, and by construction, we have
$$
\St (\Phi, C)/ P \simeq G(\mathbb{F}_q)/Z,
$$
which is simple if $q \geq 5.$

Suppose now that we have a homomorphism $\tilde{\rho} \colon \St (\Phi, C) \to GL_m (K)$ and let $N = \ker \tilde{\rho}.$ If $N \not\subset P,$ then since $\St (\Phi, C) /P$ is simple, it follows that $\St (\Phi, C) = NP.$ Hence $\tilde{\rho} (\St (\Phi, C)) = \tilde{\rho} (P)$ is solvable. On the other hand, $\St (\Phi, C)$ is a perfect group (i.e. coincides with its commutator subgroup --- see \cite[Corollary 4.4]{St1}), so $\tilde{\rho} (\St (\Phi, C))$ is also perfect, and hence trivial.

It remains to consider the case where $N \subset P.$ Let $\mathcal{G} = \tilde{\rho}(\St (\Phi, C))$ and $\mathcal{P} = \tilde{\rho} (P)$ and note that $\mathcal{G}/ \mathcal{P} \simeq \St (\Phi, C)/ P$ is a simple group. By Jordan's Theorem, there exists a normal abelian subgroup $\mathcal{N} \subset \mathcal{G}$ of index dividing $j(m).$ If $\mathcal{N} \not\subset \mathcal{P},$ then $\mathcal{G} = \mathcal{N} \mathcal{P},$ and consequently the quotient $\mathcal{G}/ \mathcal{P}$ is abelian, a contradiction. On the other hand, if $\mathcal{N} \subset \mathcal{P}$, then there exists a sufficiently large integer $M$, such that if $M \in R^{\times}$, then $\vert \mathcal{G} / \mathcal{P} \vert = \vert G(\mathbb{F}_q) / Z \vert > j(m)$, which contradicts the fact that $\vert \mathcal{G} / \mathcal{P} \vert$ divides $j(m).$ Since each of the root subgroups $e_{\alpha}(\mathbb{F}_q)$ has cardinality $q$, it suffices to have $q > j(m)$; thus, we can take $M = \max(5, (j(m))!).$ Then, if $M \in R^{\times},$ we conclude that $\mathcal{G} = \{ e \}$, and hence $\tilde{\rho}$ is trivial, as needed.

\section{Proof of Theorem 1}\label{S:SS}

We now turn to the proof of Theorem 1. Throughout this section, we let $\Gamma$ be a finitely generated group and $K$ an algebraically closed field of characteristic 0.

Recall that our goal is to prove that if $\Gamma$ is not $SS$-rigid, then $\varkappa(\Gamma, n) = \dim X_n (\Gamma)$ is bounded below by a linear function $f(n) = an + b,$ with $a > 0,$ for all $n \geq 1$. First observe that it suffices to show that for some subgroup $\Delta \subset \Gamma$ of finite index $d$, there exist constants $a_{\Delta}$ and $b_{\Delta}$, with $a_{\Delta} > 0$, such that
\begin{equation}\label{E:SubgroupBound}
\dim X_m (\Delta) \geq a_{\Delta} \cdot m + b_{\Delta}
\end{equation}
for all $m \geq 1.$ Indeed, by Corollary \ref{C:Induction}, for $m \geq 1$, we have
$$
\dim X_{md} (\Gamma) \geq \dim X_m (\Delta) \geq a_{\Delta} \cdot m + b_{\Delta} = a_{\Gamma} (md) + b_{\Delta},
$$
where $a_{\Gamma} = \frac{a_{\Delta}}{d}.$ So, setting
$$
f(n) = a_{\Gamma} \cdot n + b_{\Gamma},
$$
with $b_{\Gamma} = b_{\Delta} - a_{\Gamma} (d-1),$ we see from Lemma \ref{L:CharDim} that
\begin{equation}\label{E:SubgroupBound1}
\dim X_n (\Gamma) \geq f(n)
\end{equation}
for all $n \geq m$. If necessary, one can then further adjust $b_{\Gamma}$ such that (\ref{E:SubgroupBound1}) holds for \emph{all} $n \geq 1$.



Now, if $\Gamma$ does not satisfy condition (FAb) (see \S \ref{S:Preliminaries}), there exists a finite-index subgroup $\Delta \subset \Gamma$ that admits an epimorphism $\Delta \twoheadrightarrow \Z.$ Clearly, $\dim X_n (\Delta) \geq n,$ so (\ref{E:SubgroupBound}) holds with $a_{\Delta} = 1$ and $b_{\Delta} = 0.$
Thus, we may, and we will, assume for the remainder of this section that $\Gamma$ satisfies (FAb).

Our first step will be to establish Lemma \ref{L:Simple} below. For the statement, we will need the following
notations. Given an algebraic subgroup $\G \subset GL_n (K)$ and any finite-index subgroup $\Delta \subset \Gamma$,
we let $R(\Delta, \G)$ be the variety of representations $\rho \colon \Delta \to \G$; furthermore, we set
$$
R'(\Delta, \G) = \{ \rho \colon \Delta \to \G \mid \overline{\rho(\Delta)} = \G \},
$$
where, as usual, the bar denotes the Zariski closure.
If the connected component $\G^{\circ}$ is semisimple, then one can show that $R' (\Delta, \G)$ is an open subvariety of $R(\Delta, \G)$ (see \cite[\S 4]{IR1}).

\begin{lemma}\label{L:Simple}
There exists a finite-index subgroup $\Delta \subset \Gamma$ and a simple algebraic group $G$ such that if $\theta_G \colon R(\Delta, G) \to R(\Delta, G)/G$ is the quotient morphism (by the adjoint action), then $\overline{\theta_G (R'(\Delta, G))}$ has positive dimension.
\end{lemma}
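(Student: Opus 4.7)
The plan is to exploit the failure of $SS$-rigidity to produce a positive-dimensional family of completely reducible representations whose Zariski closures are all equal to a fixed $\G \subset GL_n(K)$, then descend to a finite-index subgroup on which these representations take values in $\G^\circ$, and finally project to a simple adjoint factor of $\G^\circ$.

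\textbf{Setting up $\G$.} Since $\Gamma$ is not $SS$-rigid, fix $n \geq 1$ with $\dim X_n(\Gamma) > 0$. Proposition \ref{D:P-KG} yields a finite list $G_1,\dots,G_d \subset GL_n(K)$, each with $G_i^\circ$ semisimple, such that the Zariski closure of every completely reducible representation $\rho \colon \Gamma \to GL_n(K)$ is $GL_n$-conjugate to some $G_i$. Hence $X_n(\Gamma)$ is a finite union of the images of the $R'(\Gamma,G_i)$ under the quotient map, so some index $i$ contributes a positive-dimensional piece; set $\G := G_i$. Since the GIT class of any $\rho \in R'(\Gamma,\G)$ in $X_n(\Gamma)$ coincides with its $N_{GL_n}(\G)$-orbit (such $\rho$ lie in the closed-orbit locus as $\G$ is reductive), we conclude $\dim R'(\Gamma,\G)/N_{GL_n}(\G) > 0$.

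\textbf{Descending to $\Delta$.} Let $\Delta \subset \Gamma$ be the intersection of all subgroups of index at most $[\G : \G^\circ]$, which is finite-index since $\Gamma$ is finitely generated. For any $\rho \in R'(\Gamma,\G)$ we have $\Delta \subset \rho^{-1}(\G^\circ)$, so $\rho(\Delta) \subset \G^\circ$. Because $[\rho(\Gamma):\rho(\Delta)] < \infty$, the group $\rho(\Gamma)$ is contained in finitely many translates of $\overline{\rho(\Delta)}$, forcing the latter to have finite index in $\G$. A closed subgroup of the connected group $\G^\circ$ having finite index in $\G$ must equal $\G^\circ$, so $\rho|_\Delta \in R'(\Delta,\G^\circ)$. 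The restriction morphism
\[
R'(\Gamma,\G)/N_{GL_n}(\G) \longrightarrow R'(\Delta,\G^\circ)/N_{GL_n}(\G^\circ) \hookrightarrow X_n(\Delta)
\]
factors through the quasi-finite map $X_n(\Gamma) \to X_n(\Delta)$ of Lemma \ref{L:Restriction}, hence has finite fibers. Therefore $R'(\Delta,\G^\circ)/N_{GL_n}(\G^\circ)$, and a fortiori $R'(\Delta,\G^\circ)/\G^\circ$ (as $\G^\circ \subset N_{GL_n}(\G^\circ)$), contains a positive-dimensional subset.

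\textbf{Extracting a simple factor.} Write the adjoint quotient as $\G^\circ/Z(\G^\circ) = G^{(1)} \times \cdots \times G^{(k)}$ with each $G^{(j)}$ simple adjoint, and consider the $\G^\circ$-equivariant morphism
\[
\pi \colon R'(\Delta,\G^\circ) \longrightarrow \prod_{j=1}^k R'(\Delta,G^{(j)}).
\]
Two representations $\rho,\rho'$ with $\pi(\rho) = \pi(\rho')$ differ by the map $\delta \mapsto \rho(\delta)\rho'(\delta)^{-1}$, which takes values in the finite group $Z(\G^\circ)$ and is a homomorphism thanks to centrality. Since $\Delta$ inherits (FAb) from $\Gamma$, $\Delta^{\mathrm{ab}}$ is finite, so $\Hom(\Delta,Z(\G^\circ))$ is finite and $\pi$ has finite fibers. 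Using surjectivity of $\G^\circ \to \prod G^{(j)}$, the induced map $R'(\Delta,\G^\circ)/\G^\circ \to \prod R'(\Delta,G^{(j)})/G^{(j)}$ also has finite fibers, so positive dimension of the source forces some factor $R'(\Delta,G^{(j)})/G^{(j)}$ to have positive dimension. Setting $G = G^{(j)}$, a simple algebraic group, gives $\dim \overline{\theta_G(R'(\Delta,G))} > 0$, as desired.

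The main obstacle is the dimension bookkeeping through these three reductions, especially the final passage from the $\G^\circ$-orbit space for a semisimple $\G^\circ$ to the orbit space for a single simple adjoint factor; the key inputs that make the bookkeeping work out are the quasi-finiteness of restriction (Lemma \ref{L:Restriction}), the finiteness of $Z(\G^\circ)$, and the (FAb) property of $\Delta$.
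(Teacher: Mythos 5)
Your proposal is correct and follows essentially the same route as the paper's own proof: invoke Proposition \ref{D:P-KG} to locate a group $\G$ with $\dim\overline{\pi(R'(\Gamma,\G))}>0$, pass to the intersection $\Delta$ of all subgroups of index at most $[\G:\G^\circ]$ using the quasi-finiteness of restriction (Lemma \ref{L:Restriction}), and then kill the finite center of $\G^\circ$ via (FAb) so as to decompose the adjoint group into simple factors and extract one with a positive-dimensional quotient. The only differences are cosmetic (you phrase the first step via $N_{GL_n}(\G)$-orbits rather than images in $X_n(\Gamma)$, and you spell out a few details, such as $\overline{\rho(\Delta)}=\G^\circ$, that the paper leaves implicit).
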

\begin{proof}
Let $R_n (\Gamma)_{ss}$ be the set of completely reducible representations $\rho \colon \Gamma \to GL_n (K)$. Then by  Proposition \ref{D:P-KG}, there exists a finite collection $G_1, \dots, G_d$ of algebraic subgroups of $GL_n (K)$ such that
$$
R_n (\Gamma)_{ss} = \bigcup_{\substack{ i \in \{1, \dots, d \}, \\ g \in GL_n(K)}} g R'(\Gamma, G_i) g^{-1}.
$$
Therefore, letting $\pi \colon R_n (\Gamma) \to X_n (\Gamma)$ be the canonical map, we obtain
\begin{equation}\label{E:D-SS1}
X_n (\Gamma) = \bigcup_{i=1}^d \pi (R' (\Gamma, G_i)).
\end{equation}
Now, our assumption that $\Gamma$ is \emph{not} $SS$-rigid means that $\dim X_{n_0} (\Gamma) > 0$ for some $n_0 \geq 1.$ So, it follows from (\ref{E:D-SS1}) that there exists an algebraic subgroup $H \subset GL_{n_0}(K)$ with semisimple connected component $H^{\circ}$ such that $\dim \overline{\pi_{n_0} (R' (\Gamma, H))} > 0$.
Suppose $[H : H^{\circ}] = d$ and let $\Delta$ be the intersection of all subgroups of $\Gamma$ of index $\leq d.$ Since $\Gamma$ is finitely generated, it is clear that $[\Gamma : \Delta] < \infty$, and for any representation $\rho \colon \Gamma \to H,$ we have $\rho (\Delta) \subset H^{\circ}.$
Next, using the commutative diagram
$$
\xymatrix{R_{n_0} (\Gamma) \ar[d]_{\pi_{n_0}^{\Gamma}} \ar[r]^{\mathrm{Res}_{\Delta}^{\Gamma}} & R_{n_0}(\Delta) \ar[d]^{\pi_{n_0}^{\Delta}} \\ X_{n_0} (\Gamma) \ar[r]^{\nu} & X_{n_0}(\Delta)}
$$
(where the horizontal maps are induced by restriction --- see (\ref{E:RestrictionRep}) and (\ref{E:RestrictionChar})),
together with Lemma \ref{L:Restriction}, we conclude that $\overline{\pi_{n_0}^{\Delta} (\mathrm{Res}_{\Delta}^{\Gamma} (R' (\Gamma, H)))}$ has positive dimension, and hence so does $\overline{\pi_{n_0}^{\Delta} (R' (\Delta, H^{\circ}))}.$

Now consider the quotient morphism $\theta \colon R(\Delta, H^{\circ}) \to R(\Delta, H^{\circ})/H^{\circ}$, where $H^{\circ}$ acts via the adjoint action. Then it follows from the above remarks that $\dim \overline{\theta (R' (\Delta, H^{\circ}))} > 0.$ Set $\underline{H} = H^{\circ}/ Z(H^{\circ}),$ where $Z(H^{\circ})$ is the center of $H^{\circ}.$ Since $Z(H^{\circ})$ is finite and $\Delta$ has a finite abelianization, we see that the natural map $R(\Delta, H^{\circ}) \to R(\Delta, \underline{H})$ has finite fibers. Consequently, letting $\underline{\theta} \colon R(\Delta, \underline{H}) \to R(\Delta, \underline{H})/\underline{H}$ be the map induced by $\theta,$ we obtain that $\dim \overline{\underline{\theta}(R'(\Delta, \underline{H}))} > 0.$ But, by \cite[Proposition 14.10]{Bo}, $\underline{H}$ is isomorphic to a product $H_1 \times \cdots \times H_t$, where each $H_i$ is a simple algebraic group, and hence
$$
R(\Delta, \underline{H})/\underline{H} = R(\Delta, H_1)/H_1 \times \cdots \times R(\Delta, H_t)/H_t.
$$
Consequently, for some $i \leq t$, the simple group $G = H_i$ satisfies the required condition.
\end{proof}

Next, let $U$ be an irreducible component of $R'(\Delta, G)$ such that $\dim \overline{\theta_G (U)} > 0$. For any $k \geq 1$, define
$$
V = \{ (\rho_1, \dots, \rho_k) \in U^k \mid \not\exists \ i, j \ \ \text{such that} \ \ \rho_j = \sigma \circ \rho_i \ \ \text{for some} \ \sigma \in {\rm Aut}~G \}.
$$
Let $\theta_{G^k} \colon R(\Delta, G^k) \to R(\Delta, G^k)/G^k$ be the quotient morphism. Viewing $\rho = (\rho_1, \dots, \rho_k) \in U^k$ as a representation $\rho \colon \Delta \to G^k,$ we can consider the restriction
$$
\theta_{G^k} \colon U^k \to (U/G)^k.
$$
Since the group ${\rm Aut}~G/ {\rm Int}~G$ of outer automorphisms of $G$ is finite (see, e.g. \cite[1.5.6]{T}), we see that $\theta_{G^k}(V)$ is a nonempty (hence dense) open subset of $(U/G)^k$, and therefore
\begin{equation}\label{E:DimV}
\dim \overline{\theta_{G^k} (V)} \geq  k \cdot \dim (U/G) \geq k,
\end{equation}
To complete the argument we will need the following lemma.

\begin{lemma}\label{L:DenseImage}{\rm (\cite{AR1}, Lemma 6)}
Suppose for some $\rho_1, \dots, \rho_k \in R' (\Delta, G),$ the image of the diagonal representation
$$
\rho = (\rho_1, \dots, \rho_k) \colon \Delta \to G^k
$$
is not dense. Then there exist $i \neq j$ and an automorphism $\sigma \in {\rm Aut}~G$ such that $\sigma \circ \rho_i = \rho_j.$
\end{lemma}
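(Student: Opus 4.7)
The plan is to prove the contrapositive by induction on $k$: assuming that no pair $i \neq j$ admits $\sigma \in {\rm Aut}~G$ with $\sigma \circ \rho_i = \rho_j$, I will show that $H := \overline{\rho(\Delta)}$ equals all of $G^k$. Throughout I exploit that the group $G$ produced in Lemma~\ref{L:Simple} is a simple algebraic group of adjoint type (being one of the simple factors $H_i$ of $\underline{H} = H^\circ / Z(H^\circ)$), so that its only closed normal subgroups are $\{1\}$ and $G$, and consequently every closed normal subgroup of $G^m$ is a subproduct $G^T$ for some $T \subseteq \{1,\ldots,m\}$.

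The base case $k = 2$ is Goursat's lemma: both projections $H \to G$ are surjective (since each $\rho_i$ has Zariski-dense image), so $H$ is the graph of an isomorphism $G/N_1 \xrightarrow{\sim} G/N_2$ of algebraic groups with $N_1, N_2 \triangleleft G$. Simplicity leaves two cases: $N_1 = N_2 = G$ yields $H = G \times G$, and $N_1 = N_2 = 1$ presents $H$ as the graph of an algebraic automorphism $\sigma$ of $G$, forcing $\rho_2 = \sigma \circ \rho_1$ and contradicting the hypothesis.

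For the inductive step ($k \geq 3$), applying the induction hypothesis to $(\rho_1, \ldots, \rho_{k-1})$ (which inherits the non-conjugacy condition) gives $H' := p_{1,\ldots, k-1}(H) = G^{k-1}$. The kernel of $p_{1,\ldots, k-1}|_H$ has the form $\{1\}^{k-1} \times M$ for some closed $M \subseteq G$; conjugating inside $H$ shows $M$ is normalized by $\pi_k(H) = G$, hence $M \in \{1, G\}$. The case $M = G$ gives $H = G^k$ directly. In the case $M = 1$, the map $p_{1,\ldots,k-1}|_H$ is an isomorphism, so $H$ is the graph of a surjective morphism of algebraic groups $\phi \colon G^{k-1} \to G$, whose kernel has the form $G^S$ for some proper $S \subsetneq \{1,\ldots,k-1\}$. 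The induced map $\bar\phi \colon G^{S^c} \to G$ is then a bijective morphism of algebraic groups, hence an isomorphism; by dimension this forces $|S^c| = 1$. Thus $\bar\phi$ is an algebraic automorphism $\sigma$ of $G$ and $\rho_k = \sigma \circ \rho_i$ for the unique $i \in S^c$, again contradicting the hypothesis. Therefore $M = G$ and $H = G^k$, completing the induction.

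The main obstacle is pinning down the form of $\phi$ in the $M = 1$ case: the argument rests on the fact that a surjective morphism of algebraic groups from a product of simple adjoint groups onto a simple group must factor through a coordinate projection followed by an algebraic automorphism, a group-theoretic Goursat-type statement underpinned by the rigid normal-subgroup structure of $G^{k-1}$ and the nonexistence of bijective morphisms of algebraic groups between spaces of different dimensions in characteristic $0$.
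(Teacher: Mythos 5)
The paper does not prove this lemma at all; it is quoted verbatim from \cite{AR1} (Lemma 6), so there is no internal proof to compare against. Your argument is a correct, self-contained proof along the standard lines one would expect in that reference: Goursat's lemma for the base case $k=2$, and for the inductive step the fact that every normal subgroup of $G^{m}$ is a subproduct $G^{T}$ when $G$ is a centerless, abstractly simple group. Both ingredients are legitimately available here: the group $G$ produced in Lemma \ref{L:Simple} is a simple factor of the adjoint group $H^{\circ}/Z(H^{\circ})$ over an algebraically closed field of characteristic $0$, hence is adjoint and abstractly simple, and your passage from the abstract graph of an isomorphism to an \emph{algebraic} automorphism correctly invokes the fact that a bijective morphism of algebraic groups in characteristic $0$ is an isomorphism (so the $\sigma$ you produce really lies in $\mathrm{Aut}\,G$ as an algebraic group, which is what the application in \S\ref{S:SS} needs). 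The only point worth making explicit is that the projections $p_{i}(H)$ and $p_{1,\ldots,k-1}(H)$ are closed subgroups (images of morphisms of algebraic groups), so that density of each $\rho_{i}(\Delta)$ indeed forces surjectivity of the projections; you use this implicitly and it is standard. I see no gap.
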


Let us now fix a matrix realization $G \subset GL_{n_1} (K).$ For $k \geq 1$ and $m = kn_1$, let
$$\pi_m \colon R_m (\Delta) \to X_m (\Delta)$$ denote the canonical map. Given $\rho \in V$, we will view it as a representation $\rho \colon \Delta \to GL_m (K)$ via the diagonal embedding $G \times \cdots \times G \subset GL_m (K).$ Suppose now that $\rho, \rho' \in V$ are such that $\pi_m (\rho) = \pi_m (\rho').$ Clearly, $\rho$ and $\rho'$ are completely reducible, so there exists $g \in GL_{m}(K)$ such that $\rho = g \rho' g^{-1}.$ Since $\overline{\rho(\Delta)} = \overline{\rho'(\Delta)} = G^k$ by Lemma \ref{L:DenseImage}, it follows that 
$g$ normalizes $G^k$, and therefore ${\rm Ad} \ g$ induces an automorphism $\sigma$ of $G^k$. Again, since
the group ${\rm Aut}~G^k/{\rm Int}~G^k$ of outer automorphisms of $G^k$ is finite,
for any $\rho \in V$, the set
$$
T(\rho) = \{\rho' \in V \mid \pi_{m} (\rho') = \pi_{m} (\rho) \}
$$
consists of finitely many orbits under ${\rm Ad}~G^k.$ Consequently, the natural map
$$
\mu \colon R(\Delta, G^k)/G^k \to X_m (\Delta)
$$
has finite fibers on the open subset $\theta_{G^k} (V).$ So, from (\ref{E:DimV}), we obtain
$$
\dim X_m (\Delta) \geq k = \frac{1}{n_1} \cdot m.
$$
Now Lemma \ref{L:CharDim} implies that
$$
\dim X_{m'} (\Delta) \geq a_{\Delta} \cdot m' + b_{\Delta},
$$
with $a_{\Delta} = \frac{1}{n_1}$ and $b_{\Delta} = -\frac{n_1 - 1}{n_1}$, for all $m' \geq n_1.$ Furthermore, if necessary, we can adjust $b_{\Delta}$ so that the inequality actually holds for all $m' \geq 1.$
In view of the remarks made at the beginning of the section, this completes the proof of Theorem 1.

\section{A ``nonlinear" proof to Theorem 2}\label{S:LG}

In this section, we give a proof of Theorem 2 that is based on Theorem \ref{T:Rigidity}. As we already mentioned in the introduction, our original approach, which appeared in \cite{IR1}, was based on estimating the
dimension of the tangent space to $X_n(\Gamma)$ at a point
corresponding to a sufficiently generic representation $\rho \in R_n(\Gamma)$, and then exploiting the
connection, going back to A.~Weil \cite{W}, between this tangent space and the cohomology group
$H^1(\Gamma , \mathrm{Ad} \circ \rho)$. A useful feature of this ``linearized" approach is that it easily
allows one to replace $\Gamma$ by a finite-index subgroup $\Delta$ by
using the injectivity of the restriction map $H^1(\Gamma ,
\mathrm{Ad} \circ \rho) \to H^1(\Delta , \mathrm{Ad} \circ (\rho
\vert \Delta))$. In order to implement this strategy, we relied on our rigidity result from \cite{IR}, which tells us that $\rho$ (as well as some related representations) has a \emph{standard description} on a suitable finite index subgroup $\Delta \subset \Gamma$ (see \cite[Main Theorem]{IR} for the precise statement). In general, this $\Delta$ depends on the representation $\rho$. Now, the refined form of rigidity that we established in Theorem \ref{T:Rigidity} allows us to choose such a subgroup $\Delta \subset \Gamma$ that works uniformly for \emph{all} completely reducible representations of a fixed dimension. Using this, we will give an alternative proof of Theorem 2 that is global in nature and does not require the ``linearization" of the problem.


We begin by fixing notations.
Let $\Phi$ be a reduced irreducible root system of rank $\geq 2$, $G$ the corresponding universal Chevalley-Demazure group scheme, and $K$ an algebraically closed field of characteristic 0. Suppose $R$ is a finitely generated commutative ring such that $(\Phi, R)$ is a nice pair, and let $\Gamma = G(R)^+$ be the elementary subgroup of $G(R).$
Recall that our goal is to show that there exists a constant $c = c(R)$, depending only on $R$, such that
$$
\dim X_n (\Gamma) = \varkappa(\Gamma, n) \leq c \cdot n
$$
for all $n \geq 1.$

The first ingredient in the proof is the following (elementary) lemma.

\begin{lemma}\label{L:RatPoints}
Let $R$ be a finitely generated commutative ring, and fix a finite system of generators $t_1, \dots, t_s$ (as a $\Z$-algebra). Denote by $\Hom (R, K)$ the set of ring homomorphisms $R \to K.$ Then the map
$$
\varphi \colon \Hom (R, K) \to \mathbb{A}_K^s, \ \ \ f \mapsto (f(t_1), \dots, f(t_s))
$$
sets up a bijection between $\mathrm{Hom}(R , K)$ and the set of
$K$-points $U(K)$ of a $\Q$-defined closed subvariety $U \subset
\mathbb{A}_K^s$.

\end{lemma}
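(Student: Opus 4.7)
The plan is to use the standard correspondence between rings and their spectra applied to a finite presentation of $R$. Since $R$ is generated by $t_1,\dots,t_s$ as a $\Z$-algebra, the assignment $x_i \mapsto t_i$ defines a surjective $\Z$-algebra homomorphism
$$
\pi \colon \Z[x_1,\dots,x_s] \twoheadrightarrow R.
$$
By the Hilbert basis theorem, the kernel $I = \ker \pi$ is finitely generated, say $I = (p_1,\dots,p_r)$ with $p_j \in \Z[x_1,\dots,x_s]$.

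Next I would define $U \subset \mathbb{A}^s_K$ to be the closed subvariety cut out by the vanishing of $p_1,\dots,p_r$. Since these polynomials have integer (hence rational) coefficients, $U$ is $\Q$-defined. It remains to check that $\varphi$ induces a bijection $\mathrm{Hom}(R,K) \xrightarrow{\sim} U(K)$. For surjectivity, given $(a_1,\dots,a_s) \in U(K)$, the evaluation map $\Z[x_1,\dots,x_s] \to K$, $x_i \mapsto a_i$, sends every $p_j$ to $0$, hence factors through $R = \Z[x_1,\dots,x_s]/I$ to yield a ring homomorphism $f \colon R \to K$ with $f(t_i) = a_i$. For injectivity, any $f \in \mathrm{Hom}(R,K)$ is completely determined by its values on the generators $t_1,\dots,t_s$, since these generate $R$ as a $\Z$-algebra and $f$ is a ring homomorphism; thus $\varphi(f) = (f(t_1),\dots,f(t_s))$ determines $f$ uniquely. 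Finally, for any $f \in \mathrm{Hom}(R,K)$, applying $f$ to the relation $p_j(t_1,\dots,t_s) = 0$ in $R$ yields $p_j(f(t_1),\dots,f(t_s)) = 0$, so $\varphi(f) \in U(K)$.

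There is no real obstacle here; the statement is essentially the functor of points description of an affine scheme of finite type over $\Z$, applied to $\mathrm{Spec}\, R$, with the $\Q$-rationality of $U$ being automatic from the fact that the defining relations have integer coefficients.
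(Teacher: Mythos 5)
Your proof is correct and follows essentially the same route as the paper: present $R$ as a quotient of a polynomial ring and identify $\mathrm{Hom}(R,K)$ with the zero set of the kernel, i.e.\ the functor-of-points description of $\mathrm{Spec}\,R$. The only cosmetic difference is that the paper first base-changes to $R_K = R\otimes_{\Z}K$ and presents $R_K$ as a quotient of $K[x_1,\dots,x_s]$, whereas you work with the integral presentation directly, which if anything makes the $\Q$-definedness of $U$ more transparent.
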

\begin{proof}
Since all of the assertions are essentially well-known statements, we only give a brief argument for the sake of completeness. Let $R_K = R \otimes_{\Z} K.$ First notice that we have a natural identification of
$\Hom (R, K)$ with the set of $K$-algebra homomorphisms $\Hom_{K-{\rm alg}} (R_K, K)$, which is obtained by sending $f \in \Hom (R, K)$ to $f \otimes {\rm id}_K$, where ${\rm id}_K \colon K \to K$ is the identity map.
Now, $R_K$ is generated by $t_1 \otimes 1, \dots, t_s \otimes 1$ as a $K$-algebra, so we have a surjective $K$-algebra homomorphism
$$
\psi \colon K[x_1, \dots, x_s] \to R_K, \ \ \ x_i \mapsto t_i \otimes 1, \ i = 1, \dots, s,
$$
where $x_1, \dots, x_s$ are independent variables.
Let $I = \ker \psi$ and set $U = V(I) \subset \mathbb{A}_K^s$ to be the algebraic set determined by $I$. Clearly, a $K$-algebra homomorphism $f \colon K[x_1, \ldots , x_s] \to K$ with $f(x_i) = a_i$ factors
through $\psi$ if and only if $(a_1, \dots , a_s) \in U(K)$. This gives a bijection between $\mathrm{Hom}_{K-\mathrm{alg}}(R_K , K)$
and $U(K)$, and leads to the required bijection $\varphi$.



\end{proof}

\vskip3mm

\addtocounter{thm}{1}

\noindent {\bf Remark 5.2.} It is clear from our construction that the affine varieties $U$ arising from different
generating systems of $R$ are in fact biregularly isomorphic, and in particular, $\dim U$ depends only
on $R$. Furthermore, if $R$ is an integral domain of characteristic zero and $L$ is its field of fractions
then $\dim U = \mathrm{tr. deg.}_{\mathbb{Q}} L$.


\vskip3mm

Next, recall that the points of $X_n (\Gamma)$ correspond to the isomorphism classes of completely reducible representations $\rho \colon \Gamma \to GL_n (K)$ (cf. \cite[Theorem 1.28]{LM}). On the other hand, according to Theorem \ref{T:Rigidity}, for a fixed $n$, there exists a finite index subgroup $\Delta \subset \Gamma$ such that for any (nontrivial) completely reducible representation $\rho \colon \Gamma \to GL_n (K)$, we have
\begin{equation}\label{E:DeltaStandard}
\rho \vert_{\Delta} = (\sigma \circ F) \vert_{\Delta}
\end{equation}
where $F \colon \Gamma \to G(K) \times \cdots \times G(K)$ is a group homomorphism arising from a ring a homomorphism
$$
f \colon R \to \underbrace{K \times \cdots \times K}_{r \ \text{copies}}
$$
(with $r \leq n$)
and $\sigma \colon G(K) \times \cdots \times G(K) \to GL_n (K)$ is a morphism of algebraic groups (in fact, an isogeny --- see \cite[Remark 4.3]{IR1}).

Using this description, we will now parametrize $X_n(\Gamma)$ using
the products $$U^{(r)} = \underbrace{U \times \cdots \times U}_{r \
\mathrm{copies}},$$ with $1 \leq r \leq n$. For $u = (u_1, \ldots , u_r) \in
U^{(r)}(K)$, we let $f^{(r)}_u$ denote the ring homomorphism
$$
(\varphi^{-1}(u_1), \ldots , \varphi^{-1}(u_r)) \colon R
\longrightarrow \underbrace{K \times \cdots \times K}_{r \
\mathrm{copies}}
$$
where $\varphi \colon \mathrm{Hom}(R , K) \to U(K)$ is the bijection
from Lemma \ref{L:RatPoints}. Furthermore, let
$$
F^{(r)}_u \colon \Gamma \longrightarrow G^{(r)}(K) :=
\underbrace{G(K) \times \cdots \times G(K)}_{r \ \mathrm{copies}}
$$
denote the group homomorphism induced by $f^{(r)}_u$.

Now, let us fix some $r \geq 1$ and a morphism of algebraic groups
$\sigma \colon G^{(r)}(K) \to GL_n(K)$, and consider the map
$$
\theta_{r , \sigma} \colon U^{(r)} \to X_n(\Gamma), \ \ u \mapsto
\pi_n^{\Gamma} \circ \sigma \circ F_u,
$$
where $\pi_n^{\Gamma} \colon R_n(\Gamma) \to X_n(\Gamma)$ is the
canonical projection.

\vskip2mm

\begin{lemma}\label{L:Regular}
$\theta_{r , \sigma}$ is a regular map.
\end{lemma}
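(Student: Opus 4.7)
The approach is to factor $\theta_{r,\sigma}$ through the representation variety $R_n(\Gamma)$ and to verify regularity of the resulting map $U^{(r)} \to R_n(\Gamma)$ by a direct (coordinate) computation.  Since $\pi_n^{\Gamma} \colon R_n(\Gamma) \to X_n(\Gamma)$ is the canonical quotient morphism, hence regular, it suffices to show that
$$
\tilde\theta_{r,\sigma} \colon U^{(r)} \longrightarrow R_n(\Gamma), \qquad u \longmapsto \sigma \circ F^{(r)}_u,
$$
is regular.

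To do this, I would begin by making the parametrization of $\Hom(R,K)$ from Lemma \ref{L:RatPoints} completely explicit. Since $R = \Z[t_1,\dots,t_s]/I$ for some ideal $I$, every element $x \in R$ can be written (non-uniquely) as $x = p(t_1,\dots,t_s)$ with $p \in \Z[x_1,\dots,x_s]$. For $u_k = (a_1,\dots,a_s) \in U(K)$ with associated homomorphism $f_{u_k} = \varphi^{-1}(u_k)$, the identity $f_{u_k}(x) = p(a_1,\dots,a_s)$ shows that evaluation at any fixed $x \in R$ is the restriction to $U$ of a polynomial function on $\A_K^s$, hence a regular function on $U$. Applying this coordinatewise to the entries of a matrix $\gamma \in G(R) \subseteq M_N(R)$ (for the natural matrix realization of $G$) shows that for each fixed $\gamma \in G(R)$ the map
$$
U^{(r)} \longrightarrow G^{(r)}(K), \qquad u \longmapsto F^{(r)}_u(\gamma) = (f_{u_1}(\gamma),\ldots,f_{u_r}(\gamma)),
$$
is regular.

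Next, since $\Gamma = G(R)^+$ is finitely generated (\cite{EJK}), fix generators $\gamma_1,\dots,\gamma_d \in \Gamma$. The representation variety $R_n(\Gamma)$ is realized as a Zariski-closed subset of $GL_n(K)^d$ via $\rho \mapsto (\rho(\gamma_1),\dots,\rho(\gamma_d))$. Composing the regular maps $u \mapsto F^{(r)}_u(\gamma_j)$ with the morphism of algebraic groups $\sigma \colon G^{(r)}(K) \to GL_n(K)$ and assembling, one obtains a regular map
$$
U^{(r)} \longrightarrow GL_n(K)^d, \qquad u \longmapsto \bigl((\sigma \circ F^{(r)}_u)(\gamma_1),\dots,(\sigma \circ F^{(r)}_u)(\gamma_d)\bigr).
$$
Since $\sigma \circ F^{(r)}_u$ is by construction a genuine group homomorphism $\Gamma \to GL_n(K)$, its image lies inside the closed subvariety $R_n(\Gamma)$, so this map factors through a regular morphism $\tilde\theta_{r,\sigma} \colon U^{(r)} \to R_n(\Gamma)$.

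Finally, post-composing with $\pi_n^{\Gamma}$ gives $\theta_{r,\sigma} = \pi_n^{\Gamma} \circ \tilde\theta_{r,\sigma}$, which is a composition of regular maps and therefore regular, as required. There is no genuine obstacle here: the only mild subtlety is to observe that the non-uniqueness of the polynomial representative $p$ of $x \in R$ is harmless because $f_{u_k}(x)$ is intrinsically defined, so different choices of $p$ yield the same regular function on $U$ (they differ by an element of the ideal cutting out $U$).
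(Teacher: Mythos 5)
Your proof is correct and follows essentially the same route as the paper's: factor $\theta_{r,\sigma}$ through $R_n(\Gamma)$ via a finite generating set of $\Gamma$, and reduce to the regularity of $u \mapsto f_u(x)$ for each fixed $x \in R$, which holds because $x$ is a polynomial in the chosen generators of $R$. The only (cosmetic) difference is in how you verify that $u \mapsto F_u(\gamma)$ is regular: you apply $f_u$ entrywise to the matrix $\gamma \in G(R)$ in a fixed matrix realization, whereas the paper writes $\gamma \in G(R)^+$ as a product of elementary root elements $e_{\alpha_i}(r_i)$ and uses $F_u(\gamma) = \prod_i e_{\alpha_i}(f_u(r_i))$; the two descriptions of $F_u$ agree by functoriality.
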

\begin{proof}
Since $\pi_n^{\Gamma}$ and $\sigma$ are regular maps, it is enough
to show that the map
$$
\lambda^{(r)} \colon U^{(r)} \to R(\Gamma , G^{(r)}), \ \ u \mapsto
F^{(r)}_u,
$$
is regular. This immediately reduces to the case $r = 1$:  namely, what
we need to show is that the map
$$
\lambda \colon U \to R(\Gamma , G), \ \ u \mapsto F_u,
$$
where, for simplicity, we set $f_u = f^{(1)}_u$ and $F_u = F^{(1)}_u$,
is regular. Fix a finite system of generators $\gamma_1, \ldots ,
\gamma_{\ell}$ of $\Gamma$. As $R(\Gamma , G)$ is given the
structure of an affine variety via the embedding
$$
R(\Gamma , G) \hookrightarrow G^{(\ell)}, \ \ \ \ \{\rho \colon \Gamma
\to G\} \mapsto (\rho(\gamma_1), \ldots , \rho(\gamma_{\ell})),
$$
it is enough to show that for any $\gamma \in \Gamma$, the map
$$
\varepsilon_{\gamma} \colon U \to G, \ \ u \mapsto F_u(\gamma),
$$
is regular. To see this, we write $\gamma$ in the form
$$
\gamma = \prod_{i = 1}^d e_{\alpha_i}(r_i)
$$
for some $\alpha_i \in \Phi$ and $r_i \in R$, where for $\alpha \in
\Phi$, we let $e_{\alpha} \colon \mathbb{G}_a \to G$ denote the corresponding
1-parameter root subgroup. Then
$$
F_u(\gamma) = \prod_{i = 1}^d e_{\alpha_i}(f_u(r_i)).
$$
Writing $r_i$ as a polynomial in terms of the chosen finite system
of generators of $R$ (cf. Lemma \ref{L:RatPoints}), we see that the map $u \mapsto
f_u(r_i)$ is regular on $U$, and the fact that
$\varepsilon_{\gamma}$ is regular follows.

\end{proof}

We can now complete the proof of Theorem 2. First, notice that in (\ref{E:DeltaStandard}), there are only finitely many possibilities for the morphism $\sigma$ up to conjugacy. Indeed, since char $K = 0$ and for any $k \geq 1$
$$
G^{(k)} (K) = \underbrace{G(K) \times \cdots \times G(K)}_{k \ \text{copies}}
$$
is a semisimple algebraic group, any finite-dimensional rational representation of $G^{(k)}(K)$ is completely reducible  (cf. \cite[\S 14.3]{H});  furthermore, it is well-known that any irreducible representation of $G^{(k)} (K)$ is a tensor product of irreducible representations of $G(K)$ (see, e.g. \cite[\S 2.5]{Sh}). Now, it follows from the Weyl Dimension Formula (see \cite[Corollary 24.6]{FH}) that $G(K)$ has only finitely many isomorphism classes of irreducible representations of a given dimension. So, we obtain the required finiteness from these remarks, combined with the fact that $r \leq n$ in (\ref{E:DeltaStandard}). Let us fix one representative for $\sigma$ from each conjugacy class, and denote these by $\sigma_1, \dots, \sigma_k$, where
$$
\sigma_i \colon G^{(r_i)} (K) \to GL_n(K)
$$
and $1 \leq r_i \leq n.$ For each $i = 1, \dots, k$, let $\theta_i = \theta_{r_i, \sigma_{i}} \colon U^{(r_i)} \to X_n (\Gamma)$ be the regular map constructed above (notice that $\theta_i$ does not depend on the choice of the representative $\sigma_i$). Also, let $\Delta \subset \Gamma$ be the finite index
subgroup appearing in (\ref{E:DeltaStandard}), and denote by $\nu \colon X_n (\Gamma) \to X_n (\Delta)$ the map induced by the restriction morphism (see (\ref{E:RestrictionChar})).
Then it follows from (\ref{E:DeltaStandard}) that
$$
\nu (X_n (\Gamma)) \subset \bigcup_{i =1}^k \nu(\theta_i (U^{(r_i)})).
$$
Hence, since $\nu$ is an integral morphism by Lemma \ref{L:Restriction}, we obtain
$$
\dim X_n (\Gamma) \leq \max_i \{\dim U^{(r_i)}\} \leq (\dim U) \cdot n,
$$
as needed.


\section{Proof of Theorem 3}\label{S:Aff}

We now turn to the proof of Theorem 3. Even though we stated the result only for complex affine algebraic varieties in the introduction, the argument in fact works over any algebraically closed field of characteristic 0. So, throughout this section, we let $K$ be a fixed algebraically closed field with ${\rm char} \ K = 0.$

Let $S \subset \mathbb{A}_K^t$ be a closed $\Q$-defined affine algebraic variety.  
Denote by $K[S]$ the ring of regular functions on $S$, and let $\Q[S] \subset K[S]$ be the $\Q$-subalgebra of $\Q$-defined regular functions (this is a $\Q$-structure on $K[S]$ --- cf. \cite[AG 11.2]{Bo}). 
Let $r_1, \dots, r_t$ denote the images in $\Q[S]$ of the coordinate functions $x_1, \dots, x_t$ on $\mathbb{A}_{\Q}^t$, and define $R_0 \subset \Q[S]$ to be the $\Z$-subalgebra generated by the $r_1, \dots, r_t.$
Notice that since $S$ is $\Q$-defined, we have  
\begin{equation}\label{E:RationalStruc}
R_0 \otimes_{\Z} K = K[S]. 
\end{equation}



Throughout this section, we take $\Phi = \mathsf{C}_2$ and let $G = Sp_4$ be the corresponding universal Chevalley-Demazure group scheme (see, however, Remark 6.2 below, where we observe that in fact our arguments work for
$\Phi = \mathsf{C}_n$ for any $n \geq 2$).
Furthermore, we let $R = R_0 \left[ \frac{1}{2M} \right],$ where $M$ is the integer appearing in Theorem \ref{T:Rigidity}(ii), and set $\Gamma = G(R)^+.$ In view of (\ref{E:RationalStruc}), we have 
\begin{equation}\label{E:RationalStruc1}
R \otimes_\Z K = K[S].
\end{equation}
Now, given a (nontrivial) completely reducible representation $\rho \colon \Gamma \to GL_4 (K)$, by Theorem \ref{T:Rigidity}(ii), there
exists a ring homomorphism
$$f \colon R \to \underbrace{K \times \cdots \times K}_{k \ \text{copies}}$$
with Zariski-dense image and a morphism of algebraic groups
\begin{equation}\label{E:morphism}
\sigma \colon \underbrace{G(K) \times \cdots \times G(K)}_{k \ \text{copies}} \to GL_4 (K)
\end{equation}
with $k \leq 4$ (in fact, $\sigma$ is an isogeny --- see \cite[Remark 4.3]{IR1})
such that
\begin{equation}\label{E:equationA}
\rho = \sigma \circ F,
\end{equation}
where $F \colon \Gamma \to G^{(k)}(K) := G(K) \times \cdots \times G(K)$ is the group homomorphism induced by $f.$

\begin{lemma}\label{L:StandardRep}
Any nontrivial rational representation $\sigma \colon G(K) \to GL_4(K)$ is equivalent to the standard representation. 
\end{lemma}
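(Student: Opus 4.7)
The plan is to reduce the statement to a classification of the low-dimensional irreducible rational representations of $G(K) = Sp_4(K)$, and then match dimensions.

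First, since $G(K)$ is semisimple and $\mathrm{char}\, K = 0$, Weyl's theorem on complete reducibility (cf.~\cite[\S 14.3]{H}) implies that any rational representation $\sigma \colon G(K) \to GL_4(K)$ decomposes as a direct sum of irreducible rational representations. Moreover, since $G = Sp_4$ is a simply connected simple algebraic group, it is a perfect group, so it admits no nontrivial $1$-dimensional rational representation. Consequently, to prove the lemma it suffices to show that the only irreducible rational representation of $G(K)$ of dimension between $2$ and $4$ is the standard representation.

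For this I would apply the Weyl Dimension Formula (cf.~\cite[Corollary 24.6]{FH}) to the root system $\mathsf{C}_2$. Letting $\omega_1, \omega_2$ denote the fundamental weights (where $\omega_1$ corresponds to the standard representation and $\omega_2$ to the second fundamental representation), a direct computation gives
$$
\dim V(a\omega_1 + b\omega_2) = \frac{(a+1)(b+1)(a+b+2)(a+2b+3)}{6}
$$
for any dominant weight $a\omega_1 + b\omega_2$ with $a, b \geq 0$. A brief inspection of small pairs $(a,b)$ shows that the only values $\leq 4$ come from $(0,0)$, which yields the trivial representation of dimension $1$, and $(1,0)$, which yields the standard representation of dimension $4$; the next-smallest case $(0,1)$ already has dimension $5$, and all other dominant weights give strictly larger dimensions.

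Combining these observations, each irreducible summand of $\sigma$ has dimension either $1$ or $4$, so the only possible decomposition types in dimension $4$ are $4 = 4$ (a single copy of $V(\omega_1)$) or $4 = 1+1+1+1$ (the trivial representation). Since $\sigma$ is nontrivial by hypothesis, the latter is excluded, and hence $\sigma$ is equivalent to the standard representation $V(\omega_1)$. No genuine obstacle is expected: the argument is essentially a table lookup, and the only computation involved is the elementary verification of the dimension formula for small highest weights.
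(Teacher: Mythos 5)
Your proof is correct and follows essentially the same route as the paper: complete reducibility plus the Weyl dimension formula for $\mathsf{C}_2$, showing that the only irreducible representations of dimension at most $4$ are the trivial one and $V(\omega_1)$. Your version is in fact slightly more explicit than the paper's (you write out the general dimension formula and rule out decompositions with trivial summands via perfectness of $Sp_4$), but the underlying argument is the same.
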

\begin{proof}
Since char $K = 0$ and $G(K)$ is an almost simple (in particular, semisimple) algebraic group, any rational representation of $G(K)$ is completely reducible. 
Now, recall that the equivalence class of an irreducible 
representation of $G(K)$ is determined by a unique highest weight (cf. \cite[Theorem 31.3]{H}); denote by $V(\lambda)$ the irreducible representation with highest weight $\lambda.$ It follows from the Weyl Dimension Formula that the nontrivial representations of $G(K)$ of smallest dimension are among the representations of the form $V(\omega),$ where $\omega$ is a fundamental dominant weight (see \cite[Corollary 24.6 and Exercise 24.9]{FH}). In the notations of \cite{Bour1}, the fundamental dominant weights for the root system of type $\mathsf{C}_2$
are $\omega_1 = e_1$, $\omega_2 = e_1 + e_2$, where $\{ e_1, e_2 \}$ is the standard basis of $\R^2.$ Using the Weyl Dimension Formula, one checks directly that
$$
\dim V(\omega_1) = 4 \ \ \ {\rm and} \ \ \ \dim V(\omega_2) = 5.
$$
Moreover, $V(\omega_1)$ is equivalent to the standard representation of $Sp_4(K).$
The lemma now follows.

\end{proof}

Now, since for $k \geq 1$, any rational representation of $G^{(k)}(K)$ is completely reducible (\cite[S 14.3]{H}), and, furthermore, an irreducible representation of $G^{(k)}(K)$ is a tensor product of irreducible representations of $G(K)$ (see \cite[\S 2.5]{Sh}), it follows immediately from the lemma that $k=1$ in (\ref{E:morphism}) and $\sigma$ is conjugate to the standard representation of $Sp_4(K).$

Next, the same argument as in Lemma \ref{L:RatPoints}, in conjunction with (\ref{E:RationalStruc1}), gives a bijection
\begin{equation}\label{E:RatPointsA}
\varphi \colon \Hom (R, K) \to S(K), \ \ \ f \mapsto (f(r_1), \dots, f(r_t)),
\end{equation}
where, as above, $r_1, \dots, r_t$ are the images in $\Q[S]$ of the coordinate functions on $\mathbb{A}_{\Q}^t.$
For a point $s \in S(K)$, let $f_s = \varphi^{-1}(s)$ be the corresponding ring homomorphism and $F_s \colon \Gamma \to Sp_4(K)$ be the group homomorphism induced by $f_s.$ Set $\sigma \colon Sp_4 (K) \to GL_4 (K)$ to be the standard representation. Then by Lemma \ref{L:Regular}, we obtain a regular map
$$
\theta \colon S(K) \to X_4 (\Gamma), \ \ \ s \mapsto \pi_4 \circ \sigma \circ F_s,
$$
where
$\pi_4 \colon R_4(\Gamma) \to X_4 (\Gamma)$ is the canonical projection. Notice that it follows from (\ref{E:equationA}) and Lemma \ref{L:StandardRep} that $\theta(S(K)) = X_4(\Gamma) \setminus \{\rho_0 \}$, where $\rho_0 \colon \Gamma \to GL_4 (\C)$ is the trivial representation. We claim that, furthermore, $\theta$ is injective.
Indeed, we have
$$
Sp_4 (R) = \{ C \in M_4 (R) \mid C T C^t = T \},
$$
where
$$
T = \left( \begin{array}{cccc} 0 & 1 & 0 & 0 \\ -1 & 0 & 0 & 0  \\ 0 & 0 & 0 & 1  \\ 0 & 0 & -1 & 0  \end{array} \right).
$$
For any $r \in R$, consider the following element $\gamma_r \in \Gamma$
$$
\gamma_r = \left( \begin{array}{cccc} -r & 1 & 0 & 0 \\ -1 & 0 & 0 & 0 \\ 0 & 0 & 1 & 0 \\ 0 & 0 & 0 & 1 \end{array} \right).
$$
Suppose now that $f_1, f_2 \colon R \to K$ are two distinct ring homomorphisms, and let $r \in R$ be such that $f_1 (r) \neq f_2 (r).$ Let $F_1, F_2 \colon \Gamma \to Sp_4 (K)$ be the corresponding group homomorphisms. Then we have
$$
(\sigma \circ F_1)(\gamma_r) =   \left( \begin{array}{cccc} -f_1(r) & 1 & 0 & 0\\ -1 & 0 & 0 & 0 \\ 0 & 0 & 1 & 0\\ 0 & 0 & 0 & 1  \end{array} \right) \ \ \ \text{and} \ \ \ (\sigma \circ F_2)(\gamma_r) = \left( \begin{array}{cccc} -f_2(r) & 1 & 0 & 0  \\ -1 & 0 & 0 & 0 \\ 0 & 0 & 1 & 0 \\ 0 & 0 & 0 & 1  \end{array} \right).
$$
Let $\tau_{\gamma_r}$ be the Fricke function corresponding to $\gamma_r$ (see \S \ref{S:Preliminaries}). Then, by our choice of $r$, we have 
$$
\tau_{\gamma_r} (\sigma \circ F_1) = \mathrm{tr}((\sigma \circ F_1)(\gamma_r)) \neq \mathrm{tr} ((\sigma \circ F_2)(\gamma_r)) = \tau_{\gamma_r} (\sigma \circ F_2).$$ 
Since the Fricke functions generate the ring of regular functions on $X_4 (\Gamma)$, it follows that $\pi_4 \circ \sigma \circ F_1$ and $\pi_4  \circ \sigma \circ F_2$ are distinct points of $X_4 (\Gamma).$ Hence, $\theta$ is injective.

To complete the proof, we construct an inverse to $\theta$, as follows. As above, let
$r_1, \dots, r_t \in R$ be the images in $\Q[S]$ of the coordinate functions $x_1, \dots, x_t$ on $\mathbb{A}_{\Q}^t$, and let
$\tau_{\gamma_{r_1}}, \dots, \tau_{\gamma_{r_t}}$ be the Fricke functions corresponding to the elements $\gamma_{r_1}, \dots, \gamma_{r_t} \in \Gamma$. Consider the regular map
$$
\psi \colon X_4 (K) \to \mathbb{A}_{K}^t, \ \ \ [\rho] \mapsto (-\tau_{\gamma_{r_1}} (\rho) + 4, \dots, -\tau_{\gamma_{r_t}} (\rho) + 4),
$$
where $[\rho]$ denotes the equivalence class of a representation $\rho \colon \Gamma \to GL_4 (K)$ (notice that $\psi$ is defined over $\Q$). 
As we have just seen, any completely reducible representation $\rho \colon \Gamma \to GL_4 (K)$ is equivalent to a representation of the form $\sigma \circ F$ for a {\it unique} ring homomorphism $f \colon R \to K.$ So, it follows from (\ref{E:RatPointsA}) and
the explicit description of $(\sigma \circ F)(\gamma_r)$ given above that $\psi (X_4 (K) \setminus \{ [\rho_0] \}) \subset S(K)$. Clearly, we have $\psi \circ \theta = {\rm id}_S$ and $\theta \circ \psi = {\rm id}_{X_4 (K) \setminus \{ [\rho_0] \}}$. Thus, $\theta$ is a $\Q$-defined isomorphism of algebraic varieties, which finishes the proof of Theorem 3.

\vskip3mm

\noindent {\bf Remark 6.2.} We would like to point out that, even though we worked with $Sp_4$ in the proof given above, essentially the same argument goes through for any $Sp_{2n}$ with $n \geq 2.$ Indeed, the crucial ingredient was Lemma \ref{L:StandardRep}, which also holds in the general case. This can be seen as follows. For 
a root system of type $\mathsf{C}_n$, the unique element that takes a fixed Borel subgroup of $G(K) = Sp_{2n}(K)$ to its opposite is $-1$, and hence any rational representation $V$ of $G(K)$ is isomorphic to its dual $V^*$; the latter condition guarantees the existence of a $G(K)$-invariant bilinear form on $V$ (see \cite[\S 31.6]{H}). Suppose now that for $m \leq 2n$, we have an irreducible $m$-dimensional representation
$$
\rho \colon Sp_{2n} (K) \to GL_{m} (K),
$$
and let $V = K^{m}$ be the underlying vector space. Then there exists a $G(K)$-invariant bilinear form $b$ on $V$, which is either symmetric or alternating, so that the image of $\rho$ is contained in an appropriate orthogonal or symplectic group. Now, since $\dim SO_m(K) = \frac{m(m-1)}{2}$
and $\dim Sp_{2 n} (K) = 2 n^2 + n$, it follows from dimension considerations and the fact that $Sp_{2n}(K)$ is an almost simple group that $m = 2n$ and ${\rm Im} \rho = Sp_{2n}(K).$ Consequently, we see that $\rho$ is in fact an automorphism of $Sp_{2n}(K)$; since all automorphisms of the latter are inner, $\rho$ is equivalent to the standard representation of $Sp_{2n}(K)$, as claimed.


\vskip5mm

\bibliographystyle{amsplain}

\end{document}